\newtheorem{theorem}{Theorem}
\newtheorem{lemma}[theorem]{Lemma}
\begin{document}

\title{complements of connected hypersurfaces in $S^4$}

\author{Jonathan A. Hillman }
\address{School of Mathematics and Statistics\\
     University of Sydney, NSW 2006\\
      Australia }

\email{jonathan.hillman@sydney.edu.au}

\begin{abstract}
Let $X$ and $Y$ be the complementary regions of 
a closed hypersurface $M$ in $S^4=X\cup_MY$.
We use the Massey product structure in $H^*(M;\mathbb{Z})$ to
limit the possibilities for $\chi(X)$ and $\chi(Y)$.
We show also that if $\pi_1(X)\not=1$ then it may be 
modified by a 2-knot satellite construction,
while if $\chi(X)\leq1$ and $\pi_1(X)$ is abelian then 
$\beta_1(M)\leq4$ or $\beta_1(M)=6$.
Finally we use TOP surgery to propose a characterization 
of the simplest embeddings of $F\times{S^1}$.
\end{abstract}

\keywords{embedding, Euler characteristic, lower central series,
Massey product, satellite, Seifert manifold, surgery}

\subjclass{57N13}

\dedicatory{In memory of Tim Cochran}

\maketitle
A closed hypersurface  in $S^n$ is orientable and 
has two complementary components,
by the higher-dimensional analogue of the Jordan Curve Theorem.
There have been sporadic papers presenting restrictions on the
orientable 3-manifolds which may embed in $S^4$,
but little is known about how many distinct embeddings there may be.
(Here and in what follows, 
``embed" shall mean ``embed as a TOP locally flat submanifold", 
unless otherwise qualified.)
While the question of which rational homology 3-spheres 
embed smoothly in $S^4$ has received considerable attention, 
work on embeddings of more general 3-manifolds is very limited.
Most of the relevant papers known to us are cited in \cite{Bu08}.

The complementary components of embeddings of $S^3$ in $S^4$ are balls, 
by the Brown-Mazur-Schoenflies Theorem.
A result of Aitchison shows that every embedding of $S^2\times{S^1}$ in $S^4$ 
has one complementary component homeomorphic to $S^2\times{D^2}$ \cite{Ru80}.
The other component is a 2-knot complement, 
with Euler characteristic $\chi=0$ and fundamental group a 2-knot group,
and so embeddings of $S^2\times{S^1}$ in $S^4$ correspond to 2-knots.
But for 3-manifolds $M$ with $\beta=\beta_1(M)>1$ even the possible 
Euler characteristics of the complementary components are not known.

We consider here $\chi(W)$ and $\pi_1(W)$, 
for $W$ the closure of a component of ${S^4\setminus{M}}$.
Our examples mostly involve Seifert fibred 3-manifolds $M$,
and the embeddings are constructed from 0-framed 
``bipartedly slice" links [defined below] representing $M$.
The obstructions to embeddings derive from the lower central series 
for $\pi_1(M)$ and its dual manifestation 
in terms of (Massey) products of classes in $H^1(M;\mathbb{Q})$.

In \S1 we use the Mayer-Vietoris sequence and Poincar\'e-Lefshetz duality 
to show that if $S^4=X\cup_MY$ then 
$\chi(X)\equiv\chi(Y)\equiv1+\beta$ {\it mod} (2),
and that we may assume that $1-\beta\leq\chi(X)\leq1\leq\chi(Y)\leq1+\beta$. 
All such possibilities may be realised by embeddings of $\#^\beta(S^2\times{S^1})$,
and all except for $1-\beta$ by embeddings of $T_g\times{S^1}$.
In \S3 we use the Massey product structure in $H^*(M;\mathbb{Z})$
to show that if $M$ fibres over an orientable base surface 
and the fibration has Euler number 1 then $\chi(X)=\chi(Y)=1$ 
is the only possibility.
At the other extreme, $\chi(X)=1-\beta$ is realizable only if 
the rational nilpotent completion of $\pi_1(M)$ is that of the free group
$F(\beta)$. 

In \S4 we give a criterion for a complementary region to be aspherical 
and of cohomological dimension at most 2.
We then show in \S5 that we may use a ``satellite" construction based on 2-knots 
to modify the fundamental group of a complementary component which is not 1-connected, 
without changing the other complementary component.
In \S6 we show that $\pi_1(X)$ can be abelian only if 
$\beta\leq4$ or $\beta=6$, 
and then $\pi_1(X)$ is one of $Z/nZ$, 
$\mathbb{Z}\oplus{Z/nZ}$, $\mathbb{Z}^2$ or $\mathbb{Z}^3$.
We give examples realizing these possibilities.
In \S7 we assume that $M$ is Seifert fibred, with orientable base orbifold. 
If the generalized Euler invariant $\varepsilon_S$ is 0 and $\chi(X)<0$ 
then the regular fibre has nonzero image in $H_1(Y;\mathbb{Q})$, 
and so $\chi(X)>1-\beta$.
If $\varepsilon_S\not=0$ then $\chi(X)=\chi(Y)=1$.

When $M=F\times{S^1}$ or when $M$ is the total space of an $S^1$-bundle
with non-orientable base the simplest embeddings of $M$ have 
one complementary component $X\simeq{F}$ 
and the other with cyclic fundamental group.
In \S8 we sketch how surgery may be used to identify such embeddings 
(up to $s$-cobordism).
(No such argument is yet available when $M$ fibres over an orientable 
base with Euler number 1.)

\section{euler characteristic and cup product}

Let $M$ be a closed connected orientable 3-manifold 
with fundamental group $\pi$,
and let $\beta=\beta_1(M;\mathbb{Q})$.
Let $T_M$ be the torsion subgroup of $H_1(M;\mathbb{Z})$ and 
$\ell_M:T_M\times{T_M}\to\mathbb{Q}/\mathbb{Z}$ the torsion linking pairing.

\begin{lemma}
Suppose $M$ embeds in $S^4$, 
and let $X$ and $Y$ be the closures of the components of $S^4\setminus{M}$.
Then $\chi(X)+\chi(Y)=2$,
$\chi(X)\equiv\chi(Y)\equiv1+\beta$ mod $(2)$,
and $1-\beta\leq\chi(X)\leq1+\beta$.
\end{lemma}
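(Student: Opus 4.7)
The plan is to combine the Mayer--Vietoris sequence for $S^4=X\cup_MY$ with Poincar\'e--Lefschetz duality on $X$ and $Y$ to pin down the rational Betti numbers of $X$ and $Y$ in terms of $\beta_1(X)$, $\beta_1(Y)$, and $\beta$; the three conclusions then drop out of a direct computation of $\chi(X)$. The additivity $\chi(X)+\chi(Y)=2$ is immediate from inclusion--exclusion, since $\chi(M)=0$ and $\chi(S^4)=2$.

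For the remaining work, I would first feed the vanishing of $\tilde H_*(S^4;\mathbb{Q})$ in degrees $1,2,3$ into Mayer--Vietoris. The segments in degrees $1$ and $2$ yield isomorphisms $H_k(M;\mathbb{Q})\cong H_k(X;\mathbb{Q})\oplus H_k(Y;\mathbb{Q})$, so in particular $\beta_1(X)+\beta_1(Y)=\beta$. The segment $0\to H_4(S^4;\mathbb{Q})\to H_3(M;\mathbb{Q})\to H_3(X;\mathbb{Q})\oplus H_3(Y;\mathbb{Q})\to 0$ begins with an injection between one-dimensional spaces, which must be an isomorphism, forcing $\beta_3(X)=\beta_3(Y)=0$.

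Next I would invoke Poincar\'e--Lefschetz duality for $(X,M)$, which over $\mathbb{Q}$ reads $H_i(X,M)\cong H^{4-i}(X)\cong H_{4-i}(X)$, so $H_1(X,M;\mathbb{Q})=H_3(X;\mathbb{Q})=0$. Plugging this into the long exact sequence of the pair shows that $H_1(M;\mathbb{Q})\to H_1(X;\mathbb{Q})$ is surjective and that the image of $H_2(M;\mathbb{Q})\to H_2(X;\mathbb{Q})$ has dimension $\beta-\beta_1(X)=\beta_1(Y)$. But Mayer--Vietoris already forces that map to be surjective, so $\beta_2(X)=\beta_1(Y)$; symmetrically $\beta_2(Y)=\beta_1(X)$.

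Writing $d=\beta_1(Y)-\beta_1(X)$, we conclude $\chi(X)=1-\beta_1(X)+\beta_2(X)=1+d$ and $\chi(Y)=1-d$. Since $|d|\le\beta_1(X)+\beta_1(Y)=\beta$, this gives $1-\beta\le\chi(X)\le1+\beta$, and since $d\equiv\beta\pmod 2$ it gives $\chi(X)\equiv 1+\beta\pmod 2$. I do not foresee a serious obstacle; the one observation worth flagging is that the parity claim is an automatic byproduct of the duality identity $\beta_2(X)=\beta_1(Y)$ rather than a separate argument.
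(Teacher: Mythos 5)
Your argument is correct and takes essentially the same route as the paper: Mayer--Vietoris isomorphisms for $S^4=X\cup_MY$ plus Poincar\'e--Lefschetz duality to obtain the key identity $\beta_2(X)=\beta_1(Y)$, and then the arithmetic $\chi(X)=1-\beta_1(X)+\beta_1(Y)$ yields the sum, parity, and bounds. The only cosmetic difference is that the paper quotes the duality isomorphism $H_2(X;\mathbb{Z})\cong H^1(Y;\mathbb{Z})$ directly, whereas you extract the same rank identity over $\mathbb{Q}$ from the long exact sequence of the pair $(X,M)$ combined with the Mayer--Vietoris surjectivity of $H_2(M;\mathbb{Q})\to H_2(X;\mathbb{Q})$.
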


\begin{proof}
The Mayer-Vietoris sequence for $S^4=X\cup_MY$ gives isomorphisms 
\[
H_i(M;\mathbb{Z})\cong{H_i(X;\mathbb{Z})\oplus{H_i(Y;\mathbb{Z})}},
\]
for $i=1,2$, while $H_j(X;\mathbb{Z})=H_j(Y;\mathbb{Z})=0$ for $j>2$.
Hence $\chi(X)+\chi(Y)=2$.
Moreover, $H_2(X;\mathbb{Z})\cong{H^1(Y;\mathbb{Z})}$,
by Poincar\'e-Lefshetz duality.
Let $\gamma=\beta_1(X)$.
Then $\beta_2(X)=\beta-\gamma$,
so $\chi(X)=1+\beta-2\gamma$, where $0\leq\gamma\leq\beta$.
\end{proof}

Clearly $\chi(X)$ is determined by $\pi_1(X)$,
and conversely $\chi(X)$ determines the rank of $H_1(X;\mathbb{Z})$.
One of the subsidiary themes of this paper is that $\chi(X)$ 
can have deeper influence on $\pi_1(X)$.
See Lemma 3 below, for instance.

We may assume $X$ and $Y$ are chosen so that $\chi(X)\leq\chi(Y)$.
Thus if $\beta=0$ then $\chi(X)=\chi(Y)=1$, 
while if $\beta=1$ then $\chi(X)=0$ and $\chi(Y)=2$.

Let $j_X$ and $j_Y$ be the inclusions of $M$ into $X$ and $Y$, 
and let $T_X$ and $T_Y$ be the torsion subgroups of
$H_1(X;\mathbb{Z})$ and $H_1(Y;\mathbb{Z})$, respectively.
Then $T_M\cong{T_X}\oplus{T_Y}$,
and each of these summands is self-annihilating under $\ell_M$, by Poincar\'e-Lefshetz duality.
Hence $\ell_M$ is hyperbolic \cite{KK}.
In particular,
$T_Y\cong{Ext(T_X,\mathbb{Z})}\cong{Hom}(T_X,\mathbb{Q}/\mathbb{Z})$,
and so $T_M$ is a direct double:
it is (non-canonically) isomorphic to $T_X\oplus{T_X}$.

The cohomology ring $H^*(M;\mathbb{Z})$ is determined by the
3-fold product 
\[
\mu_M:\wedge^3H^1(M;\mathbb{Z})\to{H^3(M;\mathbb{Z})}
\]
and Poincar\'e duality.
If we identify $H^3(M;\mathbb{Z})$ with $\mathbb{Z}$ we may view $\mu_M$
as an element of $\wedge^3(H_1(M;\mathbb{Z})/T_M)$.
Every finitely generated free abelian group $H$ and linear homomorphism
$\mu:\wedge^3H\to\mathbb{Z}$ is realized by some closed orientable 3-manifold \cite{Su}.
(If $\beta\leq2$ then $\wedge^3\mathbb{Z}^\beta=0$, and so $\mu_M=0$.)

\begin{lemma}
The cup product $3$-form $\mu_M$ is $0$ if and only if all cup products 
of classes in $H^1(M;\mathbb{Z})$ are $0$.
Its restrictions to each of $\wedge^3H^1(X;\mathbb{Z})$ 
and $\wedge^3H^1(Y;\mathbb{Z})$ are $0$.
\end{lemma}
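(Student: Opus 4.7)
The plan is to treat the two assertions of the lemma separately.

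For the first assertion, the ``if'' direction is immediate: if every $a\cup b$ with $a,b\in H^1(M;\mathbb{Z})$ vanishes, then in particular every triple product $a\cup b\cup c$ vanishes, so $\mu_M=0$. For the converse I would appeal to Poincar\'e duality: the cup-product pairing $H^1(M;\mathbb{Z})\otimes H^2(M;\mathbb{Z})\to H^3(M;\mathbb{Z})=\mathbb{Z}$ becomes non-degenerate after quotienting $H^2$ by its torsion, so the induced map $H^2(M;\mathbb{Z})\to\mathrm{Hom}(H^1(M;\mathbb{Z}),\mathbb{Z})\cong H_1(M;\mathbb{Z})/T_M$ has kernel exactly the torsion subgroup. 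If $\mu_M=0$ then $c\cup(a\cup b)=0$ for every $c\in H^1(M;\mathbb{Z})$, whence $a\cup b$ lies in this kernel. Thus $a\cup b$ is torsion in $H^2(M;\mathbb{Z})$, i.e.\ vanishes in $H^2(M;\mathbb{Q})$, which is the sense in which ``all cup products vanish'' and is consistent with the rational Massey-product viewpoint taken later in the paper.

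For the second assertion I would run the Mayer--Vietoris sequence in cohomology. Since $H^1(S^4;\mathbb{Z})=H^2(S^4;\mathbb{Z})=0$, the map $(j_X^*,j_Y^*):H^1(X;\mathbb{Z})\oplus H^1(Y;\mathbb{Z})\to H^1(M;\mathbb{Z})$ is an isomorphism. Thus any triple $a,b,c$ lying in $j_X^*H^1(X;\mathbb{Z})$ lifts to $\tilde a,\tilde b,\tilde c\in H^1(X;\mathbb{Z})$, and by naturality $a\cup b\cup c=j_X^*(\tilde a\cup\tilde b\cup\tilde c)$. The key step is then to show $H^3(X;\mathbb{Z})=0$. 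By Poincar\'e--Lefschetz duality for the compact orientable 4-manifold $(X,M)$, one has $H^3(X;\mathbb{Z})\cong H_1(X,M;\mathbb{Z})$, and the long exact sequence of the pair $(X,M)$ identifies this with the cokernel of $j_{X*}:H_1(M;\mathbb{Z})\to H_1(X;\mathbb{Z})$. But Lemma~1 already provides $H_1(M;\mathbb{Z})\cong H_1(X;\mathbb{Z})\oplus H_1(Y;\mathbb{Z})$ via the inclusions, so $j_{X*}$ is surjective and the cokernel vanishes. Therefore $\tilde a\cup\tilde b\cup\tilde c=0$ and $\mu_M|_{\wedge^3H^1(X;\mathbb{Z})}=0$; the argument for $Y$ is identical.

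The only subtle point is the first-statement converse: strictly over $\mathbb{Z}$ one only shows that $a\cup b$ is torsion in $H^2(M;\mathbb{Z})$, so the assertion must be read modulo torsion (equivalently, over~$\mathbb{Q}$). Everything else is a direct unwinding of Mayer--Vietoris and Poincar\'e--Lefschetz duality, and presents no difficulty.
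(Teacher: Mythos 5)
Your proof is correct and follows essentially the same route as the paper: Poincar\'e duality for the first assertion, and the vanishing of $H^3(X;\mathbb{Z})$ and $H^3(Y;\mathbb{Z})$ (which you derive from Lemma 1 via Poincar\'e--Lefschetz duality and the Mayer--Vietoris splitting) for the second. Your caveat on the converse of the first assertion is well taken: duality only forces $a\cup b$ to be a torsion class in $H^2(M;\mathbb{Z})$, so the equivalence should indeed be read modulo torsion (equivalently with $\mathbb{Q}$ coefficients), a point the paper's one-line proof passes over.
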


\begin{proof}
Poincar\'e duality implies immediately that $\mu_M=0$ if and only if all cup products from $\wedge^2H^1(M;\mathbb{Z})$ to $H^2(M;\mathbb{Z})$ are 0.

Since $H^3(X;\mathbb{Z})=H^3(Y;\mathbb{Z})=0$,
the restrictions of $\mu_M$ to $\wedge^3H^1(X;\mathbb{Z})$ 
and $\wedge^3H^1(Y;\mathbb{Z})$ are 0.
\end{proof}

See \cite{Le} for the parallel case of doubly sliced knots.

If $\mu_M\not=0$ then $H^1(X;\mathbb{Z})$ and $H^1(Y;\mathbb{Z})$
must be nontrivial proper summands.
However, if $\mu_M=0$ this lemma places no condition on these summands.

\section{bipartedly slice links and $S^1$-bundle spaces}

Any closed orientable 3-manifold $M$ may be obtained by integrally 
framed surgery on some $r$-component link $L$ in $S^3$, with $r\geq\beta$.
We may assume that the framings are even \cite{Kap}, and then
after adjoining copies of the 0-framed Hopf link $Ho$ (i.e., replacing
$M$ by $M\#S^3\cong{M}$) we may modify $L$ so that it is 0-framed.
(If the component $L_i$ has framing $2k\not=0$ we adjoin $|k|$ 
disjoint copies of $Ho$ and band-sum $L_i$ to each of the $2k$ new components, with appropriately twisted bands.)

If $L=L_+\cup{L_-}$ is the union of an $s$-component slice link $L_+$  
and an $(r-s)$-component slice link $L_-$ then ambient surgery 
on $S^3$ in $S^4$ shows that $M$ embeds in $S^4$, 
with complementary components having $\chi=1+2s-r$ and $1-2s+r$.
(We shall say that such a link is {\it bipartedly sliceable}.)
In particular, if $L$ is a slice link then $\beta=r$ 
and there are embeddings realizing each value of $\chi(X)$ 
allowed by this lemma,
including one with a 1-connected complementary region.
(However, it is not clear that every closed hypersurface in $S^4$
derives from a 0-framed bipartedly sliceable link.)

Each component of $S^4\setminus{M}$ has a natural Kirby-calculus presentation,
with 1-handles represented by dotting the components of one part of $L$
and 2-handles represented by the remaining components of $L$. 
Hence its fundamental group has a presentation with generators corresponding to
the meridians of the dotted circles and relators corresponding to the remaining components.

For instance,
$\#^\beta(S^2\times{S^1})$ is the result of 
0-framed surgery on the $\beta$-component trivial link,
and so has embeddings realizing all the possibilities 
for Euler characteristics allowed by Lemma 1.
In particular, it has an embedding with complementary regions
$X\cong\natural^\beta(D^3\times{S^1})$ and 
$Y\cong\natural^\beta(S^2\times{D^2})$. 
(In this case $\mu_M=0$.)

Let $T$ be the torus, $T_g=\#^gT$ the closed orientable surface 
of genus $g\geq0$,
and $P_c=\#^cRP^2$ the closed non-orientable surface with $c\geq1$ cross-caps.
If $p:E\to{F}$ is an $S^1$-bundle with base a closed surface $F$
and orientable total space $E$ then $\pi_1(F)$ acts on the fibre 
via $w=w_1(F)$, 
and such bundles are classified by an Euler class $e(p)$ in $H^2(F;\mathbb{Z}^w)\cong\mathbb{Z}$.
If we fix a generator $[F]$ for $H_2(F;\mathbb{Z}^w)$ we may define
the Euler number of the bundle by $e=e(p)([F])$.
(We may change the sign of $e$ by reversing the orientation of $E$.)
Let $M(g;(1,e))$ and $M(-c;(1,e))$ be the total spaces of the $S^1$-bundles
with base $T_g$ and $P_c$ (respectively), and Euler number $-e$.
(This is consistent with the notation for Seifert 
fibred 3-manifolds in \S5 below.)

Suppose first that $F$ is orientable.
Then $E={M(g;(1,e))}$ can only embed in $S^4$ if $e=0$ or $\pm1$,
since $T_E=0$ if $e=0$ and is cyclic of order $e$ otherwise.
The 3-torus $M(1;(1,0))\cong{S^1}\times{S^1}\times{S^1}$
may be obtained by 0-framed surgery on the Borromean rings $Bo=6^3_2$.
(We refer to the tables of \cite{Ro}.)
Since $M(g;(1,0))\cong{T_g\times{S^1}}$ is an iterated fibre sum 
of copies of $T\times{S^1}$,
it may be obtained by 0-framed surgery on a $(2g+1)$-component link 
$L$ which shares some of the Brunnian properties of $Bo$.
It has an embedding as the boundary of $T_g\times{D^2}$,
the regular neighbourhood  of the unknotted embedding of $T_g$ in $S^4$, 
with the other complementary region having fundamental group $\mathbb{Z}$.
It is easy to see that if $g\geq1$ then $T_g\times{S^1}$ has 
other embeddings with $\chi(X)$ realizing each even value $>1-\beta$.
On the other hand,  $\mu_{T_g\times{S^1}}\not=0$,
and so no embedding has a complementary region $Y$ with $\beta_1(Y)=0$.

Changing the framing on one component of $Bo$ to 1,
and applying a Kirby move to isolate this component
gives the disjoint union of the Whitehead link $Wh=6^2_3$ and the unknot.
Since the linking numbers are 0 the framings are unchanged, 
and we may delete the isolated 1-framed unknot.
Thus $M(1;(1,1))$ may be obtained by 0-framed surgery on $Wh$.
The corresponding modification of the standard 0-framed $(2g+1)$-component
link $L$ representing $T_g\times{S^1}$ involves changing the framing of the component 
$L_{2g+1}$ whose meridian represents the central factor of $\pi$.
Performing a Kirby move and deleting an isolated 1-framed unknot gives
a 0-framed $2g$-component link representing $M(g;(1,1))$.
Since the original link had partitions into two trivial links 
with $g+1$ and $g$ components respectively, 
the new link has a partition into two trivial $g$-component links.
However this is the only partition into slice sublinks,
for as we shall see in \S3 consideration of the Massey product structure 
shows that all embeddings of $M(g;(1,1))$ have $\chi(X)=\chi(Y)=1$.
 
Suppose now that $F$ is nonorientable.
Then $M(-c;(1,e))$ embeds if and only if it embeds as 
the boundary of a regular neighbourhood of an embedding 
of $P_c$ with normal Euler number $e$ \cite{CH98}.
We must have $e\leq2c$ and $e\equiv2c$ {\it mod} (4).
The standard embedding of $RP^2$ in $S^4$ is determined up to 
composition with a reflection of $S^4$.
The complementary regions are each homeomorphic to a disc bundle over $RP^2$
with normal Euler number 2, and so have fundamental group $Z/2Z$.
The standard embeddings of $P_c$ are obtained by taking iterated
connected sums of these building blocks $\pm(S^4,RP^2)$, 
and in each case the exterior has fundamental group $Z/2Z$.
The regular neighbourhoods of $P_c$ are disc bundles with boundary
$M(-c;(1,e))$.
Thus $M(-c;(1,e))$ has an embedding with one complementary component 
$X_{c,e}$ a disc bundle over $P_c$ and the other component $Y_{c,e}$
having fundamental group $Z/2Z$.

The constructions in the appendix to \cite{CH98} suggest 
framed link presentations for $M(-c;(1,e))$.
The standard embedding corresponds to a 0-framed $(c+1)$-component link
assembled from copies of the $(2,4)$-torus link $4^2_1$ and its reflection.
This is the union of an unknot and a trivial $c$-component link, 
but has no other partitions into slice links.
However, we can do better if we recall that 
$P_c\cong{P_{c-2g}}\#T_g$ for any $g$ such that $2g<c$.
Using copies of $\pm4^2_1$ and $Bo$ accordingly,
for each $e\leq2c$ such that $e\equiv2c$ {\it mod} (4)
we find a representative link with partitions into trivial sublinks corresponding to all the values $2-c\leq\chi(X)\leq\min\{2-\frac{|e|}2,1\}$
such that $\chi(X)\equiv{c}$ {\it mod} (2).
(Note Figure A.3 of \cite{CH98}.)
Are any other values realized?
In particular, does $M(-3;(1,6))$ embed with $\chi(X)=\chi(Y)=1$?
 
If we move beyond the class of $S^1$-bundle spaces, 
we may give an example of ``intermediate" behaviour.
It is not hard to show that if $H\cong\mathbb{Z}^\beta$ with $\beta\leq5$
then for every $\mu:\wedge^3H\to\mathbb{Z}$ there is an epimorphism 
$\lambda:H\to\mathbb{Z}$ such that $\mu$ is 0 on the image of 
$\wedge^3\mathrm{Ker}(\lambda)$.
Hence there are splittings $H\cong{A}\oplus{B}$ with $A$ of rank 3 or 4
such that $\mu$ restricts to 0 on each of $\wedge^3A$ and $\wedge^3B$.
However if $\beta =6$ this fails for 
\[
\mu=
e_1\wedge{e_2}\wedge{e_3}+e_1\wedge{e_5}\wedge{e_6}+e_2\wedge{e_4}\wedge{e_5}.
\]
(Here $\{e_i\}$ is the basis for $Hom(H,\mathbb{Z})$ 
which is Kronecker dual to the standard basis 
of $H\cong\mathbb{Z}^6$.)
For every epimorphism $\lambda:\mathbb{Z}^6\to\mathbb{Z}$ 
there is a rank 3 direct summand $A$ of $\mathrm{Ker}(\lambda)$ 
such that $\mu$ is nontrivial on $\wedge^3A$.
[This requires a little calculation.
Suppose that $\lambda=\Sigma\lambda_ie_i^*$.
If $\lambda_6\not=0$ then we may take $A$ to be the direct summand 
containing $\langle{f_1,f_2,f_3}\rangle$,
where $f_j=\lambda_6e_j-\lambda_je_6$, for $1\leq{j}\leq3$,
for then $\mu(f_1\wedge{f_2}\wedge{f_3})=\lambda_6^3\not=0$.
Similarly if $\lambda_3$ or $\lambda_4$ is nonzero.
If $\lambda_3=\lambda_4=\lambda_6=0$ but $\lambda_1\not=0$
then we may take $A$ to be the direct summand
containing $\langle{g_2,e_4,g_5}\rangle$,
where $g_2=\lambda_1e_2-\lambda_2e_1$ and $g_5=\lambda_1e_5-\lambda_5e_1$.
Similarly if $\lambda_2$ or $\lambda_5$ is nonzero.]

This example arose in a somewhat different context \cite{DH}.
It is the cup product 3-form of the 3-manifold $M$ given by 0-framed
surgery on the 6-component link of Figure 6.1 of \cite{DH}.
This link has certain ``Brunnian" properties.
All the 2-component sublinks, all but three of the 3-component sublinks
and six of the 4-component sublinks are trivial.
Thus $M$ has embeddings in $S^4$ with $\chi(X)=-1$ or 1,
corresponding to partitions of $L$ into a pair of trivial sublinks,
but there are no embeddings with $\chi(X)=-5$ or $-3$,
since $\mu_M$ does not satisfy the second assertion of Lemma 2.
 
\section{massey products and lower central series}

Massey product structures in the cohomology of $M$
provide further obstructions to finding embeddings with given $\chi(X)$. 
For instance, if $H^2(X;\mathbb{Q})\cong\mathbb{Q}$ or 0 
then all triple Massey products $\langle{a},b,c\rangle$ of elements $a,b,c\in{H^1}(X;\mathbb{Q})$ are proportional.

The Massey product structures for classes in $H^1(X;\mathbb{F})$, 
with $\mathbb{F} $ a prime field $\mathbb{Q}$ or $\mathbb{F}_p$,
are closely related to the rational and $p$-lower central series
of the fundamental group of $\pi_1(X)$ (see \cite{Dw}).
We shall let $G_{[n]}$ denote the $n$th term of the descending lower central series of a group $G$, defined inductively by $G_{[1]}=G$ and
$G_{[n+1]}=[G,G_{[n]}]$, for all $n\geq1$.
Similarly, the rational lower central series is given by
letting $G_{[1]}^\mathbb{Q}=G$ and $G_{[k+1]}^\mathbb{Q}$ be the preimage 
in $G$ of the torsion subgroup of $G/[G,G_{[k]}^\mathbb{Q}]$.
Then $G/G_{[k]}^\mathbb{Q}$ is a torsion free nilpotent group, and 
$\{ G_{[k]}^\mathbb{Q}\}_{k\geq1}$ is the most rapidly descending series 
of subgroups of $G$ with this property. 

The $\mathbb{N}il^3$-manifold $M=M(1;(1,1))$ has fundamental group $\pi\cong{F(2)/F(2)_{[3]}}$,
with a presentation
\[
\pi=\langle{x,y,z}\mid {z=xyx^{-1}y^{-1}},~xz=zx,~yz=zy\rangle.
\]
Every element of $\pi$ has an unique normal form $x^my^nz^p$.
The images $X,Y$ of $x,y$ in 
$H_1(\pi;\mathbb{Z})\cong H_1(T;\mathbb{Z})$
form a (symplectic) basis.
Let $\xi,\eta$ be the Kronecker dual basis for $H^1(\pi;\mathbb{Z})$.
Define functions $\phi_\xi,\phi_\eta$ and $\theta:\pi\to\mathbb{Z}$
by 
\[
\phi_\xi(x^my^nz^p)=\frac{m(1-m)}2,~
\phi_\eta(x^my^nz^p)=\frac{n(1-n)}2~\mathrm{and}~
\theta(x^my^nz^p)=-mn-p,
\]
for all $x^my^nz^p\in\pi$.
(We consider these as inhomogeneous 1-cochains with values 
in the trivial $\pi$-module $\mathbb{Z}$.)
Then 
\[
\delta\phi_\xi(g,h)=\xi(g)\xi(h),\quad 
\delta\phi_\eta(g,h)=\eta(g)\eta(h)\quad\mathrm{and}\quad
\delta\theta(g,h)=\xi(g)\eta(h),
\]
for all $g,h\in\pi$.
Thus $\xi^2=\eta^2=\xi\cup\eta=0$, and the Massey triple products 
$\langle\xi,\xi,\eta\rangle$ and $\langle\xi,\eta,\eta\rangle$ 
are represented by the 2-cocycles $\phi_\xi\eta+\xi\theta$ 
and $\theta\eta+\xi\phi_\eta$, respectively.
On restricting these to the subgroups generated by $\{x,z\}$ and $\{y,z\}$, 
we see that they are linearly independent.

In fact, $\langle\xi,\xi,\eta\rangle\cup\eta$ and
$\langle\xi,\eta,\eta\rangle\cup\xi$ each 
generate $H^3(\pi;\mathbb{Z})$
(i.e., these Massey products are the Poincar\'e duals of $Y$ and $X$,
respectively).
This is best seen topologically.
Let $p:M\to{T}$ be the natural  fibration of $M$ over the torus,
and let $x$ and $y$ be simple closed curves in $T$ which represent a basis for $\pi_1(T)\cong\mathbb{Z}^2$.
The group $H_2(M;\mathbb{Z})\cong\mathbb{Z}^2$ is generated by the images of
the fundamental classes of the tori $T_x=p^{-1}(x)$ and $T_y=p^{-1}(y)$.
If we fix sections in $M$ for the loops $x$ and $y$ we see that 
$[T_x]\bullet{x}=[T_y]\bullet{y}=0$ while $|[T_x\bullet{y}|=|T_y\bullet{x}|=1$.
Hence $[T_x]$ and $[T_y]$ are Poincar\'e dual to $\eta$ and $\xi$, respectively.
Since $\langle\xi,\xi,\eta\rangle$ restricts nontrivially to $T_x$ and trivially to $T_y$ we must have
$\langle\xi,\xi,\eta\rangle\cup\eta\not=0$, and similarly
$\langle\xi,\eta,\eta\rangle\cup\xi\not=0$.

Since the components of $Wh$ are unknotted $M$ embeds in $S^4$,
with $\chi(X)=\chi(Y)=1$, and since $\beta=2$ we have $\mu_M=0$.
On the other hand, $M$ has no embedding with $\chi(X)=-1$,
for otherwise $H^3(X;\mathbb{Z})$ would contain
$\langle\xi,\xi,\eta\rangle\cup\eta$,
and so be nontrivial. 

A similar strategy may be used for $M=M(g;(1,1))$ and $\pi=\pi_1(M)$, 
when $g>1$.
Let $\{\alpha_1,\beta_1,\dots,\alpha_g,\beta_g\}$ 
be the basis for $H=H^1(\pi;\mathbb{Z})$ which is Kronecker dual 
to a symplectic basis for $H_1(\pi;\mathbb{Z})\cong{H_1(F;\mathbb{Z})}$.
Then $H=A\oplus{B}$,
where $A$ and $B$ are self-annihilating with respect to cup product on $F$.
The Massey triple products $\langle\alpha_i,\alpha_i,\beta_i\rangle$
and $\langle\alpha_i,\beta_i,\beta_i\rangle$ (for $1\leq{i}\leq{g}$)
form a basis for $H^2(\pi;\mathbb{Z})$ which is Poincar\'e dual to the given basis for $H_1(\pi;\mathbb{Z})$.
If $L\leq{H}$ is a direct summand of rank $>g$ then there are $a\in{L\cap{A}}$ and $b\in{L/A}$ such that $a\cup{b}\not=0$ in $H^2(F;\mathbb{Z})$.
We may assume that $a=\alpha_1$ and then $b=\beta_1+b'$, 
where $b'$ is in the span of $\{\alpha_2,\beta_2,\dots,\alpha_g,\beta_g\}$.
But then $\langle{a},{a},b\rangle\cup{b}\not=0$.
It follows that if $j:M\to{S^4}$ is any embedding then
$H^1(X;\mathbb{Z})$ and $H^1(Y;\mathbb{Z})$ each have rank at most $g$,
and so $\chi(X)=\chi(Y)=1$.

The 3-form $\mu_M$ is 0 if and only if
$\pi/\pi_{[3]}^\mathbb{Q}\cong{F(\beta)/F(\beta)_{[3]}^\mathbb{Q}}$
\cite{Su}.
However, this is a rather weak condition.
The next lemma gives a stronger result.

\begin{lemma}
If $H_1(Y;\mathbb{Z})=0$ then $\pi/\pi_{[k]}\cong{F(\beta)/F(\beta)_{[k]}}$, 
for all $k\geq1$.
\end{lemma}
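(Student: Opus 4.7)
The plan is to invoke Stallings' theorem on the lower central series: a homomorphism $f:G\to H$ inducing an isomorphism on $H_1(-;\mathbb{Z})$ and an epimorphism on $H_2(-;\mathbb{Z})$ induces isomorphisms $G/G_{[k]}\cong H/H_{[k]}$ for all $k\geq1$. I would apply it twice, using $\sigma:=\pi_1(X)$ as an intermediary to compare $\pi$ with $F(\beta)$.

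First I would verify the hypotheses for the inclusion-induced map $j_X:\pi\to\sigma$. The Mayer--Vietoris sequence of Lemma 1, together with the hypothesis $H_1(Y;\mathbb{Z})=0$, makes $H_1(M;\mathbb{Z})\to H_1(X;\mathbb{Z})$ an isomorphism; and the direct-double structure of $T_M$ observed just after Lemma 1 forces $T_X=0$ once $T_Y=0$, so this is an isomorphism $\mathbb{Z}^\beta\to\mathbb{Z}^\beta$. The Poincar\'e--Lefschetz duality isomorphism $H_2(X;\mathbb{Z})\cong H^1(Y;\mathbb{Z})$ noted in the proof of Lemma 1, combined with $H^1(Y;\mathbb{Z})=0$, gives $H_2(X;\mathbb{Z})=0$; Hopf's theorem then forces $H_2(\sigma;\mathbb{Z})=0$, so the $H_2$-hypothesis of Stallings is trivially satisfied. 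A first application therefore yields $\pi/\pi_{[k]}\cong\sigma/\sigma_{[k]}$ for all $k\geq1$. Next I would pick $g_1,\dots,g_\beta\in\sigma$ projecting to a basis of $\sigma^{\mathrm{ab}}=H_1(X;\mathbb{Z})$ and define $\phi:F(\beta)\to\sigma$ by sending the free generators to the $g_i$; by construction $H_1(\phi)$ is an isomorphism, and since $H_2(F(\beta);\mathbb{Z})=H_2(\sigma;\mathbb{Z})=0$ the $H_2$-hypothesis is again trivial. A second application of Stallings' theorem gives $F(\beta)/F(\beta)_{[k]}\cong\sigma/\sigma_{[k]}$, and composing with the first isomorphism completes the proof.

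The only real subtlety I see is the passage from the topological vanishing $H_2(X;\mathbb{Z})=0$ to the group-theoretic vanishing $H_2(\sigma;\mathbb{Z})=0$, which is immediate from the Hopf surjection $H_2(X;\mathbb{Z})\twoheadrightarrow H_2(\sigma;\mathbb{Z})$. Apart from that, the argument is a routine assembly of Lemma 1's computations with Stallings' theorem; no individual step requires further calculation, and the use of $\sigma$ as an intermediary avoids the need to construct a direct homomorphism between $\pi$ and $F(\beta)$.
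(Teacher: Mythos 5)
Your proof is correct and is essentially the paper's argument: the paper likewise deduces $H_2(X;\mathbb{Z})=0$ and $T_M=0$ from $H_1(Y;\mathbb{Z})=0$, then applies Stallings' theorem both to $\pi_1(j_X)$ and to a map $\vee^\beta S^1\to X$ inducing an $H_1$-isomorphism (your $\phi:F(\beta)\to\pi_1(X)$), using $\pi_1(X)$ as the intermediary exactly as you do.
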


\begin{proof}
If $H_1(Y;\mathbb{Z})=0$ then $H_2(X;\mathbb{Z})=0$,
and $T$ must be 0, by the non-degeneracy of $\ell_M$, 
so $H_1(M;\mathbb{Z})\cong{H_1(X;\mathbb{Z})}\cong\mathbb{Z}^\beta$.
Let $f:\vee^\beta{S^1}\to{X}$ be any map such that $H_1(f;\mathbb{Z})$ 
is an isomorphism.
Then $j_X$ and $f$ induce isomorphisms on all quotients of the 
lower central series, by Stallings' Theorem \cite{St65},
and so $\pi/\pi_{[k]}\cong{F(\beta)/F(\beta)_{[k]}}$,
for all $k\geq1$.
\end{proof}

If $M$ is the result of surgery on a $\beta$-component slice link $L$
then it has an embedding with a 1-connected complementary region,
and so this lemma applies.
However there are slice links which are not homology boundary links.
(See Figure 8.1 of \cite{AIL}.)
For such links the abelianization of the link group does not factor through 
a homomorphism onto a free group.

There are parallel results for the rational lower central series 
and the $p$-central series, for primes $p$,
with coefficients $\mathbb{Q}$ and $\mathbb{F}_p$, respectively.
In particular, if $\beta_1(Y)=0$ then 
$\pi/\pi_{[k]}^\mathbb{Q}\cong{F(\beta)/F(\beta)_{[k]}^\mathbb{Q}}$,
for all $k\geq1$.
Stallings' Theorem can be refined to relate ``freeness" of quotients 
of such series and the vanishing of higher Massey products
\cite{Dw}.
For instance, the kernel of cup product $\cup_G$ 
from $\wedge^2H^1(G;\mathbb{Q})$ to $H^2(G;\mathbb{Q})$ 
is isomorphic to $G^\mathbb{Q}_{[2]}/G^\mathbb{Q}_{[3]}$
(\cite{Su} -- see also \S12.2 of \cite{AIL}.)
In particular, $\cup_G$ is injective if $G_{[2]}/G_{[3]}$ is finite.

Unfortunately, the fact that 
$\mathrm{Ker}(\cup_X)\subseteq\mathrm{Ker}(\cup_M)$ does not have useful consequences for $M$.
For if $\beta_1(X)<\beta$ then $\mathrm{Ker}(\cup_X)$ has rank at most
$\binom{\beta_1(X)}2\leq\binom{\beta-1}2=\binom{\beta}2-\beta$,
which is a lower bound for the rank of $\mathrm{Ker}(\cup_M)$.
If $\beta_1(X)=\beta$ then $\beta_2(X)=0$ so $\mu_M=0$,
and all cup products of degree-1 classes are 0.

\section{dimension and fundamental group} 

Since the complementary regions are 4-manifolds with non-empty boundary
they are homotopy equivalent to 3-dimensional complexes.
However, when such a space is homotopically 2-dimensional remains 
an open question, in general.
We shall say that $c.d.W\leq{n}$ if the equivariant chain complex 
of the universal cover $\widetilde{W}$ is chain homotopy equivalent 
to a complex of projective $\mathbb{Z}[\pi_1(W)]$-modules 
of length $\leq{n}$.

\begin{theorem}
Let $W$ be a component of $S^4\setminus{M}$,
where $M$ is a closed hypersurface.
Then $c.d.W\leq2$ if and only if $\pi_1(j_W)$ is an epimorphism.
If so, then $W$ is aspherical if and only if $c.d.\pi_1(W)\leq2$ and $\chi(W)=\chi(\pi_1(W))$.  
\end{theorem}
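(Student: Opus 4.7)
The plan is to prove both biconditionals using Poincar\'e-Lefschetz duality for $(W,M)$ regarded as a compact oriented 4-manifold with boundary, with $\mathbb{Z}[\pi]$-coefficients (writing $\pi=\pi_1(W)$), combined with the fact that $H_i(X;\mathcal{A})\cong H_i(\pi_1 X;\mathcal{A})$ for $i\leq1$. The pivotal identity is
\[
H_k(W,M;\mathcal{A})\cong H^{4-k}(W;\mathcal{A})
\]
for every left $\pi$-module $\mathcal{A}$.

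If $c.d.W\leq 2$ then $H^k(W;\mathcal{A})=0$ for all $k\geq 3$ and all $\mathcal{A}$; specializing to $\mathcal{A}=\mathbb{Z}[\pi]$, PLD gives $H_j(\widetilde W,\widetilde M)=0$ for $j\leq 1$, where $\widetilde M=p^{-1}(M)$ in the universal cover $\widetilde W$. The long exact sequence of the pair collapses to an isomorphism $H_0(\widetilde M)\cong H_0(\widetilde W)=\mathbb{Z}$, so $\widetilde M$ is connected; since its components are in bijection with the cosets of $\pi_1(j_W)(\pi_1(M))$ in $\pi$, this forces $\pi_1(j_W)$ to be surjective. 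Conversely, assume $\pi_1(j_W)$ surjective. By PLD it suffices to show $H^k(W;\mathcal{A})=0$ for $k=3,4$ and every $\mathcal{A}$, which via the long exact sequence of the pair reduces to checking that $H_0(M;\mathcal{A})\to H_0(W;\mathcal{A})$ is an isomorphism and that $H_1(M;\mathcal{A})\to H_1(W;\mathcal{A})$ is surjective. Both maps factor through group (co)homology, and the five-term exact sequence associated to $\pi_1(M)\twoheadrightarrow\pi$ delivers precisely these statements.

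Given $c.d.W\leq 2$ one has $H_k(\widetilde W)=0$ for $k>2$, so by Hurewicz and Whitehead $W$ is aspherical if and only if $\pi_2(W)=H_2(\widetilde W)=0$. The forward direction of the asphericity biconditional is immediate: $W\simeq K(\pi,1)$ gives $c.d.\pi=c.d.W\leq 2$ and $\chi(W)=\chi(\pi)$. For the converse, chain-equivalence of $C_*(\widetilde W)$ to a length-$2$ complex of finitely generated projective $\mathbb{Z}[\pi]$-modules $D_*$ yields an exact sequence
\[
0\to\pi_2(W)\to D_2\to D_1\to D_0\to\mathbb{Z}\to 0.
\]
Comparing this with any length-$2$ resolution of $\mathbb{Z}$ by finitely generated projectives (available since $\pi$ is finitely presented and $c.d.\pi\leq 2$), iterated Schanuel's lemma forces $\pi_2(W)$ to be finitely generated projective. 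An Euler-characteristic count, comparing the alternating sum of ranks in the two resolutions, then identifies the virtual rank of $\pi_2(W)$ with $\chi(W)-\chi(\pi)$, which is $0$ by hypothesis.

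The main technical hurdle is the last step: deducing $\pi_2(W)=0$ from ``finitely generated projective of virtual rank zero''. In full generality this need not hold, since nontrivial classes in $\widetilde{K}_0(\mathbb{Z}[\pi])$ produce rank-zero but nonzero projectives; the conclusion must therefore either invoke a vanishing-finiteness-obstruction hypothesis on $\pi$, or bring in further topological input --- for instance by applying PLD on the noncompact cover $\widetilde W$ (whose boundary $\widetilde M$ is connected by the first part) to extract additional constraints that force $\pi_2(W)$ itself, and not just its virtual rank, to vanish.
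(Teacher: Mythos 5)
Your handling of the first biconditional and of the forward implication of the second follows essentially the same route as the paper: equivariant Poincar\'e--Lefschetz duality with $\mathbb{Z}[\pi_1(W)]$-coefficients, connectivity of the preimage of the boundary in the universal cover to force surjectivity of $\pi_1(j_W)$, and, for the converse, reduction to low-degree (co)homology statements that hold because $\pi_1(M)\to\pi_1(W)$ is onto. One small point: since $c.d.W\leq2$ is defined at the chain level (chain homotopy equivalence to a length-$2$ projective complex), the passage from ``$H^k(W;\mathcal{A})=0$ for all $k\geq3$ and all $\mathcal{A}$'' to that conclusion needs Wall's finiteness criterion \cite{Wl66}; you use the resulting complex $D_*$ later, so this citation should be made explicit, but it is a standard step, not a conceptual gap. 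Likewise your Schanuel argument showing that $\pi_2(W)$ is a finitely generated projective module whose augmentation $\mathbb{Z}\otimes_{\mathbb{Z}[\pi_1(W)]}\pi_2(W)$ vanishes when $\chi(W)=\chi(\pi_1(W))$ is exactly the paper's.

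The genuine gap is the final step, which you explicitly leave open: deducing $\pi_2(W)=0$ from ``finitely generated projective with zero augmentation rank''. Your diagnosis of why this is hard is also somewhat off target. The finiteness obstruction of $W$ is irrelevant ($W$ is a compact manifold, hence homotopy finite), and the issue is not literally $\widetilde{K}_0(\mathbb{Z}[\pi])$: what is needed is that a nonzero finitely generated projective $\mathbb{Z}[\pi]$-module has nonzero augmentation rank, i.e.\ the weak Bass conjecture for $\pi$ combined with Kaplansky's theorem that a nonzero projective has positive Kaplansky rank; no counterexamples are known for any group. More importantly, the hypothesis $c.d.\pi_1(W)\leq2$, which you never use at this point, is precisely what closes the gap: Eckmann proved the weak Bass conjecture for groups of cohomological dimension at most $2$ \cite{Ec}, and this is how the paper concludes $\pi_2(W)=0$ and hence asphericity. (In the paper's applications $\pi_1(W)$ is free, free abelian or a surface group, so all projectives are stably free and Kaplansky's direct-finiteness theorem already suffices.) Without invoking a result of this kind your argument does not prove the stated equivalence.
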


\begin{proof}
Let $\Gamma=\mathbb{Z}[\pi_1(W)]$, and let
$C_*=C_*(\widetilde{W};\mathbb{Z})$ be the chain complex 
of $\widetilde{W}$, considered as a complex of free left $\Gamma$-modules.
Then $H_i(W;\Gamma)=H_i(C_*)$ is $H_i(\widetilde{W};\mathbb{Z})$, 
with the natural $\Gamma$-module structure,
for all $i$.
The equivariant cohomology of $\widetilde{W}$ is defined in terms of the
cochain complex $C^*=Hom_\Gamma(C_*,\Gamma)$,
which is naturally a complex of right modules.
Let $\overline{C}^q$ be the left $\Gamma$-module obtained via 
the canonical anti-involution of $\Gamma$,
defined by $g\mapsto{g}^{-1}$ for all $g\in\pi_1(W)$,
and let $H^j(W;\Gamma)=H^j(\overline{C}^*)$.
Equivariant Poincar\'e-Lefshetz duality gives isomorphisms
$H_i(W;\Gamma)\cong{H^{4-i}}(W,\partial{W};\Gamma)$
and $H^j(W;\Gamma)\cong{H_{4-j}}(W,\partial{W};\Gamma)$,
for all $i,j\leq4$.

If $c.d.W\leq2$ then $H_i(\widetilde{W},\partial\widetilde{W};\mathbb{Z})=0$ 
for $i\leq1$,
and so $\partial\widetilde{W}$ is connected.
Therefore $\pi_1(j_W)$ must be surjective.
Conversely, if $\pi_1(j_W)$ is an epimorphism then we may assume that 
$W$ may be obtained from $M$ (up to homotopy)
by adjoining cells of dimension $\geq2$.
Hence $H_i(W,\partial{W};\Gamma)$ 
and $H^j(W,\partial{W};\Gamma)$ are 0 for $i,j\leq1$.
Therefore $H_q(W;\Gamma)=H^q(W;\Gamma)=0$ for all $q>2$,
and so $C_*$ is chain homotopy equivalent to a complex $P_*$
of finitely generated projective $\Gamma$-modules of length at most 2,
by Wall's finiteness criteria \cite{Wl66}.

If $W$ is aspherical then $c.d.\pi_1(W\leq2$,
and we must have $\chi(W)=\chi(\pi_1(W))$.
Conversely, if $\pi_1(j_W)$ is onto then $\Pi=H_2(P_*)\cong\pi_2(W)$ 
is the only obstruction to asphericity.
If, moreover, $c.d.\pi_1(W)\leq2$ we may apply Schanuel's Lemma, 
to see that $P_*$ splits as 
\[
P_*=\Pi\oplus(Z_1\to{P_1}\to{P_0}),
\]
where $\pi$ is concentrated in degree 2,
$Z_1$ is the submodule of 1-cycles and $Z_1\to{P_1}\to{P_0}$ 
is a resolution of the augmentation module $\mathbb{Z}=H_0(P_*)$.
Now $\mathbb{Z}\otimes_\Gamma\Pi\cong{H_2(W;\mathbb{Z})}$ 
is a free abelian group of rank $\chi(W)-\chi(\pi_1(W))$.
If, moreover, $\chi(W)=\chi(\pi_1(W))$ then $\Pi=0$, and so $W$ is aspherical,
since the weak Bass Conjecture holds for groups of cohomological dimension $\leq2$ \cite{Ec}.
\end{proof}

In our applications of Theorem 4 below,
$\pi_1(W)$ is either free, free abelian or the fundamental group 
of an aspherical surface. 
Hence all projective $\Gamma$-modules are stably free, 
and so we could use an old result of Kaplansky instead of invoking \cite{Ec}.
There seems to be no simple criterion for $W$ to be aspherical when $c.d.W=3$.

Let $K$ be the Artin spin of a nontrivial classical knot, 
and let $X=X(K)$ be the exterior of 
a tubular neighbourhood of $K$ in $S^4$.
Then $\pi_1(X)\cong\pi{K}$, the knot group, 
and $M=\partial{X}\cong{S^2}\times{S^1}$.
In this case $c.d.\pi{K}=2$ and $\chi(X)=\chi(\pi{K})=0$,
but $\pi_1(j_X)$ is not onto, and $X$ is not aspherical.
(Thus $c.d.X=3$.)

There are two essentially different partitions of the standard link 
representing $T_g\times{S^1}$ into moieties with $g+1$ and $g$ components.
For one, $X\cong{S^1}\times(\natural^g(D^2\times{S^1})$,
which is aspherical (as to be expected from Theorem 4);
for the other, $\pi_1(X)\cong\mathbb{Z}^2*F(g-1)$, and $X$ is not aspherical.
(In neither case is $Y$ aspherical.)

\section{modifying the group}

We may modify embeddings by ``2-knot surgery" on a complementary region, 
as follows.
Let $N_\gamma$ be a regular neighbourhood in $X$ of a simple closed curve representing $\gamma\in\pi_1(X)$.
Then $\overline{S^4\setminus{N_\gamma}}\cong{S^2\times{D^2}}$ 
contains $Y$ and $M$.
If $K$ is a 2-knot with exterior $E(K)$ then 
$\Sigma=\overline{S^4\setminus{N_\gamma}}\cup{E(K)}$ 
is a homotopy 4-sphere, and so is homeomorphic to $S^4$.
The complementary components to $M$ in $\Sigma$ are
$X_{\gamma,K}=\overline{X\setminus{N_\gamma}}\cup{E(K)}$ and $Y$.
This construction applies equally well to simple closed curves in $Y$.

When $M=S^2\times{S^1}$ is embedded as the boundary of 
a regular neighbourhood of the trivial 2-knot, 
with $X=D^3\times{S^1}$ and $Y=S^2\times{D^2}$,
the core $S^2\times\{0\}\subset{Y_1}$ is $K$,
realized as a satellite of the  trivial knot.
This construction gives all possible embeddings of 
$S^2\times{S^1}$ in $S^4$ (up to composition with self-homeomorphisms 
of domain and range), by Aitchison's result \cite{Ru80}.
For this reason, we shall refer to this construction as the
{\it $2$-knot satellite} construction.

Let $t$ be the image of a meridian for $K$ in the knot group $\pi{K}=\pi_1(E(K))$.
If $\gamma$ has infinite order in $\pi_1(X)$ then 
$\pi_1(X_{\gamma,K})$ is a free product with amalgamation $\pi_1(X)*_\mathbb{Z}\pi{K}$; 
if it has finite order $c$ then $\pi_1(X_{\gamma,K})\cong\pi_1(X)*_{Z/cZ}(\pi{K}/\langle\langle{t^c}\rangle\rangle)$.
(Note that if $K=\tau_ck$ is a nontrivial twist spin then
$\pi{K}/\langle\langle{t^c}\rangle\rangle\cong\pi{K}'\rtimes{Z/cZ}$.)

If $\gamma=1$ then any simple closed curve representing $\gamma$
is isotopic to one contained in a small ball,
since homotopy implies isotopy for curves in 4-manifolds.
Hence in this case 2-knot surgery does not change the topology of $X$.

It is well known that  a nilpotent group with cyclic abelianization is cyclic.
It follows that the natural projection of $\pi_1(X_{\gamma,K})$ onto $\pi_1(X)$ 
induces isomorphisms of corresponding quotients by terms of the lower central series.
Thus we cannot distinguish these groups by such quotients.
Nevertheless, we have the following result.

\begin{theorem}
If $\pi_1(X)\not=1$ then there are infinitely many groups of the form $\pi_1(X_{\gamma,K})$.
\end{theorem}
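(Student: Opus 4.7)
The plan is to exhibit infinitely many $G_n := \pi_1(X_{\gamma, K_n})$ by taking $K_n = \tau_n k$ to be twist spins of a fixed classical knot $k$ of growing complexity, and to distinguish them using the Alexander invariant $A(G_n) := G_n' / G_n''$ viewed as a module over $\Lambda := \mathbb{Z}[H_1(X)]$. The ring $\Lambda$ is independent of $n$ since $H_1(G_n) = H_1(X)$ in all cases (the meridian of $K_n$ gets identified with $\gamma$, and $H_1(\pi K_n) = \mathbb{Z}$ is killed). I take $k$ to be the figure-eight knot, whose Alexander polynomial $\Delta_k(t) = t^2 - 3t + 1$ has its two roots off the unit circle; the resulting Lucas-type recursion $L_n = 3L_{n-1} - L_{n-2}$ yields $|H_1(M_n(k);\mathbb{Z})| = |L_n - 2|$, which grows without bound. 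Writing $c$ for the order of $\gamma$ in $\pi_1(X)$ and $B_n$ for $\pi K_n$ (if $c=\infty$) or $\pi K_n/\langle\langle t^c \rangle\rangle$ (if $c$ is finite), we have $G_n \cong \pi_1(X) *_C B_n$ with $C = \langle \gamma \rangle$.

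To compute $A(G_n)$ I apply the Mayer--Vietoris sequence for amalgamated products in group homology with coefficients in $\Lambda$:
\[
H_1(C;\Lambda) \to H_1(\pi_1(X); \Lambda) \oplus H_1(B_n; \Lambda) \to A(G_n) \to H_0(C; \Lambda) \to H_0(\pi_1(X); \Lambda) \oplus H_0(B_n; \Lambda).
\]
The module $H_1(B_n; \Lambda)$ is the classical Alexander module $A(K_n) = H_1(M_n(k);\mathbb{Z})$, with the deck-transformation action of $t$, base-changed along $\mathbb{Z}[t^{\pm}] \to \Lambda$, $t \mapsto [\gamma]$. When $\gamma$ has infinite order, $[\gamma] - 1$ is a non-zero-divisor in $\Lambda$ and $\Lambda$ is free (hence flat) over $\mathbb{Z}[\langle\gamma\rangle]$ (a set of coset representatives of $\langle\gamma\rangle$ in $H_1(X)$ gives a basis), so this base change has a $\Lambda$-Fitting ideal of norm at least $|A(K_n)|$, and thus invariants that grow without bound in $n$. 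Consequently, the $A(G_n)$ realise infinitely many isomorphism classes as $\Lambda$-modules, and the $G_n$ are pairwise non-isomorphic along a subsequence.

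When $\pi_1(X)$ has no element of infinite order, I run the analogous argument with $\gamma$ of finite order $c$, replacing $K_n$ by $\tau_{cn}k$ so that $B_n$ has the form $(\pi_1(M_{cn}(k))_{\theta^c}) \rtimes \mathbb{Z}/c$, and using the Lyndon--Hochschild--Serre spectral sequence of this extension to see that $A(B_n)$, and hence $A(G_n)$, still grows with $n$. The main obstacle is the bookkeeping in the Mayer--Vietoris sequence: ensuring the growing $B_n$-contribution to $A(G_n)$ is not absorbed by the fixed image of $A(\pi_1(X))$. This I handle by passing to the cokernel of the natural map $A(\pi_1(X)) \to A(G_n)$, onto which a short diagram chase produces a surjection from the base change of $A(K_n)$ with kernel bounded by the image of $H_0(C;\Lambda) = \Lambda/([\gamma]-1)$; since $[\gamma]-1$ is a non-zero-divisor, this image is a fixed cyclic $\Lambda$-module, so the order of the cokernel still grows with $n$.
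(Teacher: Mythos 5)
There is a genuine gap: your argument hinges on the Alexander module $A(G_n)=G_n'/G_n''$ over $\Lambda=\mathbb{Z}[H_1(X;\mathbb{Z})]$, and this invariant degenerates precisely when $H_1(X;\mathbb{Z})$ is finite. The theorem only assumes $\pi_1(X)\neq1$, and the paper itself exhibits embeddings of homology $3$-spheres with $\pi_1(X)\cong{I^*}$, a nontrivial perfect group. In that situation $H_1(X;\mathbb{Z})=0$, so $\Lambda=\mathbb{Z}$, every $G_n$ is perfect (the meridian of $K_n$ is identified with $\gamma$, so $H_1(G_n)\cong H_1(X)=0$), hence $G_n'=G_n$ and $G_n'/G_n''=1$ for all $n$: your invariant is identically trivial and distinguishes nothing. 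More generally, whenever $H_1(X;\mathbb{Z})$ is finite no choice of $\gamma$ makes $[\gamma]-1$ a non-zero-divisor in $\Lambda$, so both your main computation and your ``finite order $c$'' fallback (which still lives inside the same metabelian framework) break down. A secondary, repairable issue: even when $[\gamma]$ has infinite order in $H_1(X)$, the module $\Lambda\otimes_{\mathbb{Z}[t^{\pm1}]}A(K_n)$ is an infinite direct sum of copies of $A(K_n)$ when $\langle[\gamma]\rangle$ has infinite index, so ``the order of the cokernel grows'' is not well-formed as stated, and killing the image of the cyclic module $\Lambda/([\gamma]-1)$ can a priori destroy a lot; you would need to count minimal generators or use Fitting ideals to make this quantitative.

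The paper's proof sidesteps homology entirely, which is exactly what is needed in the perfect case. When $\pi_1(X)$ is torsion-free it takes $K$ to be the $2$-twist spin of a $2$-bridge knot, so that $\pi{K}\cong{Z/nZ}\rtimes\mathbb{Z}$ and the kernel of $\pi_1(X_{\gamma,K})\to\pi_1(X)$ is a free product of copies of $Z/nZ$; the groups are then distinguished by their torsion. When $\pi_1(X)$ has an element of finite order $c>1$ it uses Cappell--Shaneson knots, for which $\pi{K}/\langle\langle{t^c}\rangle\rangle$ is a finite group of arbitrarily large order, and distinguishes the resulting amalgams by the essential uniqueness of decompositions as graphs of groups with finite or one-ended vertex groups. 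To rescue your approach you would at minimum have to supply a separate argument of this kind for finite $H_1(X;\mathbb{Z})$; as written, the proposal does not prove the theorem.
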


\begin{proof} 
Suppose first that $\pi_1(X)$ is torsion-free and that $\gamma\not=1$.
If $\pi_1(K)\cong{Z/nZ}\rtimes\mathbb{Z}$ then
$\pi_1(X_{\gamma,K})\cong\pi_1(X)*_\mathbb{Z}\pi{K}$ is an extension of a torsion-free group 
by the free product of countably many copies of $Z/nZ$.
Since $Z/nZ\rtimes\mathbb{Z}$ is the group of the 2-twist spin of a 2-bridge knot, 
for every odd $n$, the result follows.

If $\pi_1(X)$ has an element $\gamma$ of finite order $c>1$ then we use instead
Cappell-Shaneson 2-knots.
Let $a$ be an integer, and let $f_a(t)=t^3-at^2+(a-1)t-1$.
If $a>5$ the roots $\alpha,\beta$ and $\gamma$ of $f_a$ are real,
and we may assume that $\gamma<\beta<\alpha$.
Elementary estimates give the bounds
\[
\frac1a<\gamma<\frac12<\beta<1-\frac1a<a-2<\alpha<a.
\]
If $A\in{SL(3,\mathbb{Z})}$ is the companion matrix of $f_a$ then 
$\mathbb{Z}^3\rtimes_A\mathbb{Z}$ is the group of a ``Cappell-Shaneson" 2-knot  $K$.
The quotient $\mathbb{Z}^3/(A^c-I)\mathbb{Z}^3$ is a finite group of order
the resultant $Res(f_a(t),t^c-1)=(\alpha^c-1)(\beta^c-1)(\gamma^c-1)$,
where $\alpha,\beta$ and $\gamma$ are the roots of $f_a(t)$.
This simplifies to
\[
\alpha^p+\beta^p+\gamma^p-(\alpha\beta)^p-(\beta\gamma)^p-(\gamma\alpha)^p
=\alpha^p(1-\beta^p-\gamma^p)+\varepsilon,
\]
where $0<\varepsilon<2$.
It follows easily from our estimates that $|Res(f_a(t),t^c-1)|>a^{c-1}$, if $a>3c$.
Hence $\pi{K}/\langle\langle{t^c}\rangle\rangle$ is a finite group of order $>ca^{c-1}$.
We then use the fact that finitely presentable groups have an essentially unique representation as the fundamental
group of a graph of groups, with all vertex groups finite or one ended.
(See Proposition 7.4 of Chapter IV of \cite{DD}.)
Thus if $K$ and $L$ are two such 2-knots such that $\pi{K}/\langle\langle{t^c}\rangle\rangle$ and
$\pi{L}/\langle\langle{t^c}\rangle\rangle$ are finite groups of different orders,
both greater than that of any of the finite vertex groups in such a representation of $\pi_1(X)$
then $\pi_1(X_{\gamma,K})\not\cong\pi_1(X_{\gamma,L})$.
\end{proof}

If  $H_1(M;\mathbb{Z})\not=0$ then $X$ is not simply-connected,
and so there are infinitely many embeddings with one complementary region $Y$ 
and distinguishable by the fundamental groups of the other region, by Theorem 5.
However if $M$ is an homology 3-sphere then $X$ and $Y$ are homology balls,
and it may not be easy to decide whether $\pi_1(X)$ and $\pi_1(Y)$ are nontrivial.
When $M=S^3$ the complementary regions are homeomorphic to the 4-ball $D^4$,
by the Brown-Mazur-Schoenflies Theorem.
If $\pi_1(M)\not=1$ is there an homology 4-ball $X$ with $M\cong\partial{X}$,
$\pi_1(X)\not=1$ and the normal closure of the image of $\pi_1(M)$ in $\pi_1(X)$ 
being the whole group?
If so, there is an embedding with one complementary region $X$ and the other 1-connected.

Perhaps the simplest nontrivial example of a smooth embedding of an homology
3-sphere with neither complementary region 1-connected is given by the link displayed below.
For this example $\pi_1(X)\cong\pi_1(Y)\cong{I^*}$,
the binary icosahedral group, with presentation $\langle{x,y}\mid{x^{-2}yxy},~y^{-4}xyxy\rangle$.

\setlength{\unitlength}{1mm}
\begin{picture}(90,40)(-30,15)

\put(0,49){$\vartriangleright$}
\put(-2,46){$x$}
\put(32,47){$\bullet$}
\put(35,49){$s$}
\put(-16,22){$0$}
\put(78,22){$0$}
\put(62,49){$\vartriangleright$}
\put(60,46){$y$}
\put(32,38.3){$\bullet$}
\put(30,36){$r$}
\put(43,46.4){$>$}
\put(21,20.5){$<$}

\curve(9,47,0,50, -10.6,45.6,-15,35,-10.6,24.6,0,20,10,23)
\qbezier(10.5,45.5)(12.5,44)(13.5,40)
\qbezier(14,38)(14.5,36)(14.6,34)
\qbezier(14.6,32.3)(14.3,30)(14,28.5)
\qbezier(13.4,27.1)(13,26)(11.5,24.2)

\qbezier(10,46)(30,50)(54,46.2)
\qbezier(13,39)(27,38)(50.9,24.7)
\qbezier(14,33.2)(17,32.7)(19,32)
\qbezier(13.1,28.2)(13.9,27.7)(14.5,27.2)
\qbezier(11,23.4)(24,20)(52,22.5)

\qbezier(10,46)(8,44.6)(10.9,44)
\qbezier(13,39)(11,40)(12.7,41)
\qbezier(14,33.2)(12,34.2)(13.8,35.2)
\qbezier(13.3,28.2)(11.4,29.5)(13.8,30.2)
\qbezier(11,23.4)(9,24.6)(11.4,25.4)
\qbezier(13.2,25.8)(13.7,25.9)(14.2,26)
\qbezier(14.5,27.2)(15, 26.4)(14.2,26)

\qbezier(13,44.3)(24,45)(40,38)

\qbezier(14,41.2)(27,42)(49.5,34.3)
\qbezier(15,35.7)(18,36.3)(21,37)
\qbezier(15,30.5)(25,33)(35, 32)
\qbezier(23,37.7)(25,38)(27.5,39.5)
\qbezier(29,40.4)(30,40.8)(30.7,41)

\curve(55,47,64,50, 74.6,45.6,79,35,74.6,24.6,64,20,53.5,22.5)
\qbezier(53.6,45.7)(52.9,45.1)(52.5,44.5)
\qbezier(51.8,43.5)(51.5,43)(51,42.3)
\qbezier(50.5,40.8)(50,40)(49.8,39)
\qbezier(49.3,37.8)(49.1,36.5)(49,35)
\qbezier(49,34)(49,33)(49.3,31)
\qbezier(52.8,23.2)(51,25)(49.6,29.5)

\qbezier(54,46.2)(55.3,45.5)(53.8,44.8)
\qbezier(52.3,45.2)(50.5,44.6)(52.3,44)
\qbezier(52.3,44)(53.5,43.3)(52,42.8)
\qbezier(50.6,43)(49,42.3)(51,41.6)
\qbezier(51,41.6)(52,40.9)(51,40.2)
\qbezier(49.5,40.3)(48,39.4)(50,38.2)

\qbezier(50,38.2)(51.3,37.3)(49.8,36.8)
\qbezier(48.5,37)(40,43)(32,41.6)

\qbezier(49.5,34.3)(51,33.6)(49.5,33)
\qbezier(48.5,32.9)(44,33.4)(37,32.3)

\qbezier(52.4,24.3)(55,23.3)(52,22.5)
\qbezier(46.5,32.5)(48.5,31)(50.6,29.5)
\qbezier(45.5,33.7)(43,35.6)(42,36)
\qbezier(50.6,29.5)(52,28)(51,27.5)

\qbezier(50,27.2)(49,27)(48,27)
\qbezier(21.5,31.2)(32,28)(45,27)

\end{picture}

If we swap the 0-framings and the dots, we obtain a Kirby-calculus presentation for $Y$.
Since the loops $r$, $s$, $x$ and $y$ determine words $x^{-2}yxy$, $y^{-4}xyx$,
$srsr^{-2}$ and $s^{-4}rsr$, respectively,  $\pi_1(X)$ and $\pi_1(Y)$ have equivalent presentations.
(There are 32 possible choices for the crossings involving only the dotted curves, all giving similar examples.
Is there a choice for which there is a homeomorphism of $S^3$ interchanging the images of $L_-$ and $L_+$?)

Other examples of this kind may be found in \cite{Li}.
Lickorish showed also that any two groups $G,H$ with balanced finite presentations 
and isomorphic abelianizations are the fundamental groups 
of a pair of complementary regions of some connected hypersurface in $S^4$ \cite{Li04}.
In particular, any two perfect groups with balanced presentations can be realized as $\pi_1(X)$ and $\pi_1(Y)$
for some embedding of an homology 3-sphere in $S^4$.

Are there optimal ``minimal" embeddings of $M$, for given $\chi(X)$?
For instance, is there an embedding for which the natural map 
$j_\Delta:\pi\to\pi_1(X)\times\pi_1(Y)$ is onto?
This is clearly so if both factors are nilpotent,
since $H_1(j_\Delta)$ is an isomorphism,
and so $j_\Delta$ induces epimorphisms on all corresponding quotients 
of the lower central series.
However these quotients are rarely isomorphic.

\begin{theorem}
If $\pi/\pi_{[3]}^\mathbb{Q}\cong
(\pi_1(X)/\pi_1(X)_{[3]}^\mathbb{Q})\times(\pi_1(Y)/\pi_1(Y)_{[3]}^\mathbb{Q})$
then $\chi(X)=1-\beta$ or $3-\beta$.
\end{theorem}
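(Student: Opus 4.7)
The plan is to apply Sullivan's theorem $\ker(\cup_G) \cong G^\mathbb{Q}_{[2]}/G^\mathbb{Q}_{[3]}$ (on group cohomology) to translate the hypothesis into a splitting statement for the cup product in degree one, and then to extract the Euler characteristic from a dimension count. Using Mayer--Vietoris (Lemma 1) I write $H^1(M;\mathbb{Q}) = A \oplus B$ with $A = j_X^* H^1(X;\mathbb{Q})$ and $B = j_Y^* H^1(Y;\mathbb{Q})$, set $\beta_X = \dim A$ and $\beta_Y = \dim B$, and (after possibly swapping $X, Y$) assume $\beta_X \geq \beta_Y$; the conclusion then reads $\beta_Y \in \{0,1\}$.

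First I would apply Sullivan's theorem to $\pi, \pi_1(X), \pi_1(Y)$ and read off the hypothesis on second graded quotients of the nilpotent completion to get
\[
\dim \ker(\cup_\pi) = \dim \ker(\cup_{\pi_1(X)}) + \dim \ker(\cup_{\pi_1(Y)}).
\]
Naturality of cup product under $j_X^*, j_Y^*$ places the two individual kernels inside the complementary summands $\wedge^2 A, \wedge^2 B$ of $\wedge^2 H^1(M;\mathbb{Q})$; the dimension equality then forces $\ker(\cup_\pi) = j_X^*\ker(\cup_{\pi_1(X)}) \oplus j_Y^*\ker(\cup_{\pi_1(Y)})$, so $\ker(\cup_\pi) \cap (A \otimes B) = 0$. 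To pass to cup product on $M$ itself, I invoke Hopf's surjection $H_2(M;\mathbb{Q}) \twoheadrightarrow H_2(\pi;\mathbb{Q})$, whose dual $H^2(\pi;\mathbb{Q}) \hookrightarrow H^2(M;\mathbb{Q})$ is injective, giving $\ker(\cup_M) = \ker(\cup_\pi)$ as subspaces of $\wedge^2 H^1(M;\mathbb{Q})$. Hence $\cup_M$ is injective on $A \otimes B$.

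Poincar\'e duality identifies $H^2(M;\mathbb{Q}) \cong A^* \oplus B^*$, so injectivity of $\cup_M|_{A \otimes B}$ yields $\beta_X \beta_Y \leq \beta_X + \beta_Y$, equivalently $(\beta_X - 1)(\beta_Y - 1) \leq 1$. With $\beta_X \geq \beta_Y$ this leaves only $\beta_Y \in \{0,1\}$ (the desired conclusion) or the borderline case $\beta_X = \beta_Y = 2$.

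The hard part will be ruling out the borderline case $\beta_X = \beta_Y = 2$. Here Lemma 2 forces $\mu_M \in (\wedge^2 A \otimes B) \oplus (A \otimes \wedge^2 B)$, which in chosen bases is parametrized by four scalars $\lambda_1, \lambda_2, \mu_1, \mu_2$. I would write down the $4 \times 4$ matrix for $\cup_M|_{A \otimes B}: A \otimes B \to A^* \oplus B^*$ in these bases and check that its determinant is identically zero in those scalars. Concretely, when all of $\lambda_i, \mu_j$ are nonzero, $\xi = (\lambda_2 a_1 - \lambda_1 a_2) \otimes (\mu_2 b_1 - \mu_1 b_2)$ is an explicit nonzero kernel element; when $\lambda_1 = \lambda_2 = 0$ or $\mu_1 = \mu_2 = 0$, the image of $\cup_M|_{A \otimes B}$ drops into a proper summand ($B^*$ or $A^*$), so the four-dimensional $A \otimes B$ cannot inject into a two-dimensional target. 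Either outcome contradicts the injectivity proved above, so $\beta_X = \beta_Y = 2$ is impossible, leaving $\beta_Y \in \{0,1\}$, equivalent to $\chi(X) \in \{1-\beta, 3-\beta\}$ by Lemma 1.
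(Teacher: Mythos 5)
Your proposal follows essentially the same route as the paper: Sullivan/Dwyer converts the hypothesis into the rank identity $\dim\mathrm{Ker}(\cup_\pi)=\dim\mathrm{Ker}(\cup_{\pi_1(X)})+\dim\mathrm{Ker}(\cup_{\pi_1(Y)})$, your count $\beta_X\beta_Y\leq\beta_X+\beta_Y$ is exactly the paper's inequality $\binom{\beta}{2}-\beta\leq\binom{\gamma}{2}+\binom{\beta-\gamma}{2}$, i.e.\ $\gamma(\beta-\gamma)\leq\beta$, and the borderline case $\beta_X=\beta_Y=2$ (the paper's $\beta=4$, $\gamma=2$) is excluded by showing that $\mu_M$ forces more kernel than the hypothesis allows, which is the paper's ``rank at least $3\neq 2$'' step in a more explicit matrix form. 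So the strategy is sound and the conclusion follows. One computational slip should be fixed in the borderline case: writing $\mu_M=a_1\wedge a_2\wedge(\lambda_1b_1+\lambda_2b_2)+(\mu_1a_1+\mu_2a_2)\wedge b_1\wedge b_2$, your proposed kernel element $(\lambda_2a_1-\lambda_1a_2)\otimes(\mu_2b_1-\mu_1b_2)$ is not in the kernel in general; for instance its pairing against $a_1$ is $\mu_M\bigl((\lambda_2a_1-\lambda_1a_2)\wedge(\mu_2b_1-\mu_1b_2)\wedge a_1\bigr)=\lambda_1(\lambda_2\mu_1-\lambda_1\mu_2)$, which vanishes only when $\lambda_1\mu_2=\lambda_2\mu_1$ (or $\lambda_1=0$). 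The correct element is $(\mu_2a_1-\mu_1a_2)\otimes(\lambda_2b_1-\lambda_1b_2)$, with the roles of the $\lambda$'s and $\mu$'s interchanged; it lies in the kernel identically and is nonzero whenever $(\lambda_1,\lambda_2)\neq(0,0)$ and $(\mu_1,\mu_2)\neq(0,0)$. This also repairs your case division, which as stated (``all four scalars nonzero'' versus ``$\lambda_1=\lambda_2=0$ or $\mu_1=\mu_2=0$'') omits mixed vanishing patterns such as $\lambda_1=\mu_2=0$ with $\lambda_2\mu_1\neq0$. Your primary assertion that the $4\times4$ determinant vanishes identically is nevertheless true (and is what carries the argument), so the exclusion of $\beta_X=\beta_Y=2$, and hence the theorem, goes through.
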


\begin{proof}
Let $\gamma=\beta_1(X)\geq\frac\beta2$.
If the 2-step quotients ($G/G_{[3]}^\mathbb{Q}$) 
are isomorphic then $\mathrm{Ker}(\cup_M)$ has rank at most $\binom\gamma2+\binom{\beta-\gamma}2$. 
Since $\beta_2(\pi)=\beta$ we must have
\[
\binom\beta2-\beta\leq\binom\gamma2+\binom{\beta-\gamma}2.
\]
This reduces to $\beta\geq\gamma(\beta-\gamma)$, and so
either $\gamma\geq\beta-1$ or $\beta=4$ and $\gamma=2$.
In the latter case, consideration of $\mu_M$ shows that the rank of $\pi_{[2]}^\mathbb{Q}/\pi_{[3]}^\mathbb{Q}$ 
is at least $3\not=\binom22+\binom22$, 
so this cannot occur.
Thus $\chi(X)=1+\beta-2\gamma\leq3-\beta$.
\end{proof}

If $j$ is any embedding with $H_1(Y;\mathbb{Z})=0$
(respectively, $\chi(X)=1-\beta$) 
then $H_2(j_\Delta)$ (respectively, 
$H_2(j_\Delta;\mathbb{Q})$ is an epimorphism,
and so $j_\Delta$ induces isomorphisms on all quotients of
the (rational) lower central series.

If $F$ is a closed orientable surface then the embedding $j$ of
$M\cong{F}\times{S^1}$ as the boundary of a regular neighbourhood 
of the standard unknotted embedding of $F$ in $S^4$ has 
$\chi(X)=3-\beta$ and $j_\Delta$ an isomorphism.
On the other hand, if $\beta=2$ then $\cup_M=0$, by Poincar\'e duality for $M$,
so $\pi_{[2]}^\mathbb{Q}/\pi_{[3]}^\mathbb{Q}\not=0$.
Therefore for no embedding $j$ with $\chi(X)=1$ is
$H_2(j_\Delta;\mathbb{Q})$ an epimorphism.
Can anything more be said about the cases with $\chi(X)=3-\beta$
(and $\beta$ even)?

If $\pi_1(X)$ is a nontrivial proper direct factor of $\pi$ then 
$\pi\cong\pi_1(F)\times\mathbb{Z}$ for some closed orientable surface $F$,
and so $M\cong{F}\times{S^1}$.
In this case, either $F=S^2$ and $\pi_1(X)\cong\mathbb{Z}$ 
or $F$ is aspherical and $\pi_1(X)\cong\pi_1(F)$.

If $\pi_1(X)$ is a free factor of $\pi$ then it is a 3-manifold group,
and the image of the fundamental class $[M]$ in
$H_3(\pi_1(X);\mathbb{Z})$ is trivial, 
since $M=\partial{X}$ and so $H_3(j_X)=0$.
Hence $\pi_1(X)$ is a free group.
In particular,
$\pi\cong\pi_1(X)*\pi_1(Y)$ only if $\pi$ is itself a free group,
and then $M\cong\#^\beta(S^2\times{S^1})$.


\section{abelian fundamental group} 

In this section we shall show that manifolds with embeddings for which $\pi_1(X)$ is abelian are severely constrained.

\begin{theorem}
Suppose $M$ has an embedding in $S^4$ for which 
$\pi_1(X)_{[2]}=\pi_1(X)_{[3]}$.
Then either $\beta\leq4$ or $\beta=6$.
If $\beta=0$ or $2$ then $\pi_1(X)\cong{Z/nZ}$
or $\mathbb{Z}\oplus{Z/nZ}$, respectively, for some $n\geq1$,
while if $\beta=1$, $3$, $4$ or $6$ then $\pi_1(X)\cong\mathbb{Z}^r$,
where $r=\lfloor\frac{\beta+1}2\rfloor$. 
If $\pi_1(X)$ is abelian and $\beta=1$ or $3$ then $X$ is aspherical.
\end{theorem}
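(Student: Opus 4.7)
The plan is to translate the central-series hypothesis into injectivity of the cup product on $X$, derive a quadratic inequality between $\gamma=\beta_1(X)$ and $\beta$, combine it with a symmetric inequality from the $Y$-side to bound $\beta$, and then read off the structure of $\pi_1(X)$ and invoke Theorem~4 for asphericity.

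First I would check that $\pi_1(X)_{[2]}=\pi_1(X)_{[3]}$ propagates to the rational filtration: writing $G=\pi_1(X)$, the commutator $[G,G_{[2]}^\mathbb{Q}]$ contains $G_{[2]}=G_{[3]}$, and since $G_{[2]}^\mathbb{Q}/G_{[2]}$ is torsion, the quotient $G_{[2]}^\mathbb{Q}/[G,G_{[2]}^\mathbb{Q}]$ is torsion, so $G_{[2]}^\mathbb{Q}\subseteq G_{[3]}^\mathbb{Q}$; together with the standard descending inclusion this gives $G_{[3]}^\mathbb{Q}=G_{[2]}^\mathbb{Q}$. The Dwyer--Sullivan identification cited in \S3 then yields $\mathrm{Ker}(\cup_X)\cong G_{[2]}^\mathbb{Q}/G_{[3]}^\mathbb{Q}=0$, so $\cup_X\colon\wedge^2 H^1(X;\mathbb{Q})\to H^2(X;\mathbb{Q})$ is injective. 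Combined with $\beta_2(X)=\beta-\gamma$ from Lemma~1, this yields the basic inequality
\[
\binom{\gamma}{2}\leq\beta-\gamma.
\]

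To bound $\beta$ I would aim to establish the dual inequality $\binom{\beta-\gamma}{2}\leq\gamma$. Lemma~2 lets us decompose $\mu_M\in\wedge^3 H^1(M;\mathbb{Q})^*$, relative to $H^1(M)=H^1(X)\oplus H^1(Y)$, as $\mu_M^{XXY}+\mu_M^{XYY}$ with $\mu_M^{XXY}\in\wedge^2 H^1(X)^*\otimes H^1(Y)^*$ and $\mu_M^{XYY}\in H^1(X)^*\otimes\wedge^2 H^1(Y)^*$; injectivity of $\cup_X$ is non-degeneracy of the contraction $\wedge^2 H^1(X)\to H^1(Y)^*$ built from $\mu_M^{XXY}$. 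The main obstacle is to propagate this to non-degeneracy of the contraction $\wedge^2 H^1(Y)\to H^1(X)^*$ built from $\mu_M^{XYY}$, by combining Poincar\'e--Lefschetz duality on $M$ (which identifies $H^2(Y)\cong H^1(X)^*$ and constrains the rank of $\cup_M$ inside $H^2(M)$) with the vanishing of triple Massey products in $H^*(X)$ forced by the hypothesis. Granting this, adding the two inequalities gives $\gamma^2+(\beta-\gamma)^2\leq3\beta$; since the left side is at least $\beta^2/2$, this forces $\beta\leq6$, with equality requiring $\gamma=3$. A case check then eliminates $\beta=5$, since the only candidate $\gamma$-values fail one of the two inequalities.

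For each admissible $\beta\in\{0,1,2,3,4,6\}$ the two inequalities pin $\gamma=\lfloor(\beta+1)/2\rfloor$. Writing $\pi_1(X)=\mathbb{Z}^\gamma\oplus T_X$ in the abelian setting of this section, for $\gamma\geq2$ I would rule out $T_X\ne0$ via deficiency: $\pi_1(X)$ abelian makes $\pi_1(j_X)$ surjective (it factors as abelianization followed by projection $H_1(X)\oplus H_1(Y)\twoheadrightarrow H_1(X)$), so Theorem~4 yields $c.d.(X)\leq2$, whence $X$ is homotopy equivalent to a finite 2-complex with $\chi(X)=1-\mathrm{def}(\pi_1(X))$. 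Matching this against $\chi(X)=1+\beta-2\gamma$ forces $\mathrm{def}(\pi_1(X))=\gamma-\binom{\gamma}{2}$, which is attained by $\mathbb{Z}^\gamma$ but not by $\mathbb{Z}^\gamma\oplus T_X$ when $T_X\ne0$ and $\gamma\geq2$ (the torsion summand costs an extra generator without freeing a relator). For $\beta=0,2$ the torsion survives, and the direct-double $T_M\cong T_X\oplus T_X$ from \S1 combined with the hypothesis narrows $T_X$ to $\mathbb{Z}/n\mathbb{Z}$. Finally, when $\beta=1,3$ we have $\pi_1(X)=\mathbb{Z}^\gamma$ with $\gamma=1,2$, $c.d.(\pi_1(X))=\gamma\leq2$, and $\chi(X)=0=\chi(\mathbb{Z}^\gamma)$, so Theorem~4 gives $X$ aspherical.
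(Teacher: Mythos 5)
There is a genuine gap at the crucial step, namely the bound on $\beta$. Your first inequality $\binom{\gamma}{2}\leq\beta-\gamma$ is sound, and is essentially the rational shadow of the paper's argument (the paper works integrally: $H_2(A;\mathbb{Z})=A\wedge A$ is a quotient of $H_2(\pi_1(X);\mathbb{Z})$ by the five-term sequence, hence of $H_2(X;\mathbb{Z})\cong\mathbb{Z}^{\beta-r}$ by Hopf's theorem). But the ``dual inequality'' $\binom{\beta-\gamma}{2}\leq\gamma$ cannot be obtained by the duality argument you sketch, and you concede the point (``Granting this\dots''). The contraction $\wedge^2H^1(Y;\mathbb{Q})\to H^1(X;\mathbb{Q})^*$ is not non-degenerate in general: whenever $\pi_1(Y)$ is free --- which happens in the paper's own examples, e.g.\ the partition of the augmented Borromean link with $\pi_1(X)\cong\mathbb{Z}^2$ and $\pi_1(Y)\cong F(2)$ --- all cup products of degree-one classes of $Y$ vanish, so $\mu_M^{XYY}=0$. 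Worse, the inequality itself, and the theorem as you have set it up, are false without the standing normalization from \S1 that $\chi(X)\leq\chi(Y)$, i.e.\ $\gamma\geq\beta/2$: take $M=\#^4(S^2\times S^1)$ with $X\cong(S^1\times D^3)\natural(S^2\times D^2)^{\natural 3}$, so that $\pi_1(X)\cong\mathbb{Z}$ is abelian, $\beta=4$, $\gamma=1$, and $\binom{\beta-\gamma}{2}=3>1=\gamma$ (and $\pi_1(X)\not\cong\mathbb{Z}^2$). The remark closing \S6 flags exactly this: the theorem ``uses the hypothesis $\beta_1(X)\geq\frac12\beta$''. Once that convention is invoked the difficulty evaporates: $\beta-\gamma\leq\gamma$ together with $\binom{\gamma}{2}\leq\beta-\gamma$ gives $\binom{\gamma}{2}\leq\gamma$, hence $\gamma\leq3$ and $\gamma+\binom{\gamma}{2}\leq\beta\leq2\gamma$, which yields precisely $\beta\in\{0,1,2,3,4,6\}$ with $\gamma=\lfloor(\beta+1)/2\rfloor$. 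Your elimination of $\beta=5$ by combining the two quadratic inequalities is therefore correct arithmetic resting on an unproved (indeed unprovable-by-your-route) premise.

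A secondary weakness is the treatment of torsion. The identity $\chi(X)=1-\mathrm{def}(\pi_1(X))$ is not justified: $c.d.\,X\leq2$ in the sense of Theorem 4 gives a length-two projective chain complex, not a presentation 2-complex, and $\pi_2(X)$ contributes to $H_2(X;\mathbb{Z})$; moreover your ``forces $\mathrm{def}(\pi_1(X))=\gamma-\binom{\gamma}{2}$'' silently upgrades the inequality $\binom{\gamma}{2}\leq\beta-\gamma$ to an equality. The paper disposes of torsion in one stroke by the same integral computation as above: $H_2(A;\mathbb{Z})\cong\mathbb{Z}^{\binom{r}{2}}\oplus\tau^r\oplus(\tau\wedge\tau)$ must be a quotient of $\mathbb{Z}^{\beta-r}$ with $\beta-r\leq r$, which kills $\tau$ as soon as $r\geq2$ and forces $\tau$ cyclic when $r\leq1$. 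Your reduction of the rational hypothesis to injectivity of $\cup_X$ via Dwyer--Sullivan, and your final asphericity argument via surjectivity of $\pi_1(j_X)$ and Theorem 4, do match the paper.
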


\begin{proof}
Let $r=\beta_1(X)$, $A=H_1(X;\mathbb{Z})$ and $\tau=T_X$.
Then $2r\geq\beta$ and $A\cong\mathbb{Z}^r\oplus{\tau}$.
Since $A$ is abelian, 
$H_2(A;\mathbb{Z})=
A\wedge{A}\cong\mathbb{Z}^{\binom{r}2}\oplus\tau^r\oplus(\tau\wedge\tau)$.

If $\pi_1(X)_{[2]}=\pi_1(X)_{[3]}$ then $H_2(A;\mathbb{Z})$ 
is a quotient of $H_2(\pi_1(X);\mathbb{Z})$,
by the 5-term exact sequence of low degree for $\pi_1(X)$
as an extension of $A$. 
This in turn is a quotient of $H_2(X;\mathbb{Z})\cong\mathbb{Z}^{\beta-r}$,
by Hopf's Theorem.
Hence $\binom{r}2\leq\beta-r\leq{r}$, and so $r\leq3$.
If $\tau\not=0$ then either $r=\beta=0$ and $\tau\wedge\tau=0$, or $r=1$, $\beta=2$ and $\tau\wedge\tau=0$.
In either case, $\tau$ is (finite) cyclic.
If $\beta\not=0$ or 2 then $\tau=0$ and either $r=\beta=1$,
or $r=2$ and $\beta=3$ or 4, or $r=3$ and $\beta=6$.
The final assertion follows immediately from Theorem 4.
\end{proof}

If $\pi_1(X)$ is abelian, $r=\beta=0$ and $T_M=0$ then $X$ is contractible.
In the remaining cases $X$ cannot be aspherical,
since either $\pi_1(X)$ has nontrivial torsion (if $\beta=0$),
or $H_2(X;\mathbb{Z})$ is too big (if $\beta=2$ or 4),
or $H_3(X;\mathbb{Z})$ is too small (if $\beta=6$).

If we assume merely that $\pi_1(X)_{[2]}/\pi_1(X)_{[3]}$ is finite 
(i.e., that the rational lower central series stabilizes after one step) 
then $\cup_X$ is injective, 
and a similar calculation gives the same restrictions on $\beta$.

Embeddings with $\pi_1(X)$ abelian realizing these possibilities 
may be easily found.
(If $\pi_1(X)\not=1$ then 2-knot surgery gives further examples with 
$\pi_1(X)$ nonabelian and $\pi_1(X)_{[2]}=\pi_1(X)_{[3]}$.)
The simplest examples are for $\beta=0,1$ or 3,
with $M\cong{S^3}$, $M=S^2\times{S^1}$ or 
$S^1\times{S^1}\times{S^1}$ 
the boundary of a regular neighbourhood of a point or of
the standard unknotted embedding of $S^2$ or $T$ in $S^4$,
respectively.

Other examples may be given in terms of representative links.
When $\beta=0$ the $(2,2n)$ torus link gives examples with $X\cong{Y}$ and $\pi_1(X)\cong{Z/nZ}$.
When $\beta=1$ we may use any knot which bounds a slice disc $D\subset{D^4}$
such that $\pi_1(D^4\setminus{D})\cong\mathbb{Z}$, 
such as the unknot or the Kinoshita-Terasaka knot.
(All such knots have Alexander polynomial 1.
Conversely every Alexander polynomial 1 knot bounds a TOP locally flat
slice disc with group $\mathbb{Z}$, by a striking result of Freedman.)
The links $8^3_5$ and $8^3_6$ give further simple examples. 
(These each have a trivial 2-component sublink and an unknotted third
component which represents a meridian of the first component or the
product of meridians of the first two components, respectively.)
When $\beta=2$ any 2-component link with unknotted components 
and linking number 0, such as the trivial 2-component link or $Wh$,
gives examples with $\pi_1(X)\cong\mathbb{Z}$.
We may construct examples realizing $\mathbb{Z}\oplus{Z/nZ}$ from 
the 4-component link obtained from $Bo$ by replacing one component 
by its $(2,2n)$ cable.
When $\beta=3$ we may use the links $Bo$, $9^3_9$ or $9^3_{18}$.
(These each have a trivial 2-component sublink and an unknotted third
component which represents the commutator of the meridians
of the first two components. 
However neither of the latter two links is Brunnian.)
 
Let $L$ be the 4-component link obtained from  $Bo$
by adjoining a parallel to the third component,
and let $M$ be the 3-manifold $M$ obtained by 0-framed surgery on $L$.
Then the meridians of $L$ represent a basis $\{e_i\}$for
$H_1(M;\mathbb{Z})\cong\mathbb{Z}^4$, and 
$\mu_M=e_1\wedge{e_2}\wedge{e_3}+e_1\wedge{e_2}\wedge{e_4}$.
This link may be partitioned into the union of two trivial 2-component 
links in two essentially different ways,
and ambient surgery gives two essentially different embeddings of $M$.
If the sublinks are $\{L_1,L_2\}$ and $\{L_3,L_4\}$ then the complementary components have fundamental groups $\mathbb{Z}^2$ and $F(2)$.
Otherwise, 
the complementary components are homeomorphic 
and have fundamental group $\mathbb{Z}^2$.

If $M$ is an example with $\beta=6$ and $\pi_1(X)$ and $\pi_1(Y)$ abelian 
then
\[
\mu_M=e_1\wedge{e_5}\wedge{e_6}+e_2\wedge{e_4}\wedge{e_6}+
e_3\wedge{e_4}\wedge{e_5}+e_1\wedge{e_2}\wedge{\tilde{e}_6}+
e_1\wedge{e_3}\wedge{\tilde{e}_5}+e_2\wedge{e_3}\wedge{\tilde{e}_4},
\]
where $\{e_1,e_2,e_3\}$ is a basis for $H_1(X;\mathbb{Z})$ and 
$\{e_4,e_5,e_6\}$ and $\{\tilde{e}_4,\tilde{e}_5,\tilde{e}_6\}$
are bases for $H_1(Y;\mathbb{Z})$.
The simplest link giving rise to such a 3-manifold is a 6-component link with all 2-component sublinks trivial, a partition into two trivial 3-component links, and also a partition into two copies of $Bo$.
It also has some trivial 4-component sublinks, 
but no trivial 5-component sublinks.
We shall not give further details.

In all of the above examples except for one $\pi_1(Y)$ is also abelian. 
Note that Theorem 7 does {\it not\/} apply to $\pi_1(Y)$,
as it uses the hypothesis $\beta_1(X)\geq\frac12\beta$!
 
\section{seifert fibred 3-manifolds}

We shall assume henceforth that $M$ is Seifert fibred.
Let $M=M(g;S)$ be the orientable Seifert fibred 3-manifold 
with base orbifold $T_g(\alpha_1,\dots,\alpha_r)$ and Seifert data 
$S=\{(\alpha_1,\beta_1),\dots,(\alpha_r,\beta_r)\}$,
where $1<\alpha_i$ and $(\alpha_i,\beta_i)=1$, for all $1\leq{i}\leq{r}$.
If $c>0$ we let also $M(-c;S)$ be the orientable Seifert fibred 3-manifold 
with base orbifold $\#^cRP^2(\alpha_1,\dots,\alpha_r)$
and Seifert data $S$.
If $r=1$, we allow also the possibility $\alpha_1=1$.
Let $\varepsilon_S=-\Sigma_{i=1}^{i=r}(\beta_i/\alpha_i)$ 
be the generalized Euler invariant
of the Seifert bundle.
(Our notation is based on that of \cite{JN}. 
In particular, we do not assume that $0<\beta_i<\alpha_i$.)

Let $p:M\to{B}$ be the projection to the base orbifold $B$,
and let $|B|$ be the surface underlying $B$.
If $h$ is the image of the regular fibre in $\pi$
then the subgroup generated by $h$ is normal in $\pi$,
and $\pi^{orb}(B)\cong\pi/\langle{h}\rangle$.

\begin{lemma}
\cite{CT}
Let $M$ a an orientable Seifert fibred $3$-manifold.
If $B$ is nonorientable or if  $\varepsilon_S\not=0$
then $H^*(M;\mathbb{Q})\cong{H^*}(\#^\beta{S^2\times{S^1}};\mathbb{Q})$.
Otherwise, the image of $h$ in $H_1(M;\mathbb{Q})$ is nonzero, 
and $H^*(M;\mathbb{Q})\cong{H^*(|B|\times{S^1};\mathbb{Q})}$.
\end{lemma}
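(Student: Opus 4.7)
The plan is to reduce the lemma to two computations: a direct calculation of $H_1(M;\mathbb{Q})$ (and the image of $h$) from the standard Seifert presentation of $\pi$, and an application of the Gysin sequence of $p:M\to|B|$ to recover the ring structure. I would split into cases according to the orientability of $B$ and the vanishing of $\varepsilon_S$. Writing the standard presentation \cite{JN} with generators $a_i,b_i$ (or $v_i$ in the nonorientable case), exceptional-fibre classes $q_j$ and regular fibre $h$, and abelianizing rationally, the relations $q_j^{\alpha_j}h^{\beta_j}=1$ give $q_j=-(\beta_j/\alpha_j)h$, and substituting into the defining surface relation collapses it to a single scalar equation whose coefficient on $h$ is (up to sign) $\varepsilon_S$. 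In the orientable case this is the only relation touching $h$, so $h$ survives rationally iff $\varepsilon_S=0$; in the nonorientable case the extra relations $v_ihv_i^{-1}=h^{-1}$ already force $2h=0$, so $h$ is torsion regardless of $\varepsilon_S$. A quick count of remaining generators recovers $\beta_1(M;\mathbb{Q})=\beta$, and Poincar\'e duality gives $\beta_2=\beta$ as well.

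For the ring structure I would use the Gysin sequence of $p$. Rationally, the neighbourhoods of the exceptional fibres are $\mathbb{Q}$-homology solid tori, so Mayer--Vietoris comparison allows one to treat $p$ as an honest $S^1$-bundle with rational Euler class proportional to $\varepsilon_S$. If $B$ is orientable and $\varepsilon_S=0$, the Gysin sequence splits in every degree and multiplicativity upgrades this to the K\"unneth isomorphism $H^*(M;\mathbb{Q})\cong H^*(|B|\times S^1;\mathbb{Q})$ of rings. If $B$ is orientable and $\varepsilon_S\ne 0$, cup product with the Euler class is an isomorphism $H^0(|B|;\mathbb{Q})\to H^2(|B|;\mathbb{Q})$, forcing the pullback $H^2(|B|;\mathbb{Q})\to H^2(M;\mathbb{Q})$ to vanish; since every cup product of degree-one pullbacks lies in this zero image, all cup products on $H^1(M;\mathbb{Q})$ vanish, and Poincar\'e duality then yields the ring of $\#^\beta S^2\times S^1$. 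For nonorientable $B$, I would pass to the orientation double cover $\tilde M\to M$ (Seifert fibred over the orientable surface $\tilde B$), apply the previous analysis to $\tilde M$, and identify $H^*(M;\mathbb{Q})$ with the $\mathbb{Z}/2$-invariants of $H^*(\tilde M;\mathbb{Q})$; vanishing of cup products of $1$-classes transfers via injectivity on invariants.

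The main obstacle I anticipate is the rigorous identification of the Gysin Euler class of the genuine $S^1$-bundle $M\setminus\bigcup N_j\to|B|\setminus\bigcup U_j$, extended across the tubular neighbourhoods of the exceptional fibres by Mayer--Vietoris, with the invariant $\varepsilon_S$ of the lemma, since the Seifert invariants in \cite{JN} are only defined up to a normalization of the pairs $(\alpha_j,\beta_j)$ and to sign conventions. Once the normalizations are reconciled and the exceptional-fibre contributions are tracked correctly, the Gysin argument is routine, the vanishing of cup products of $1$-classes in the $\varepsilon_S\ne0$ case follows formally, and the invariants-under-$\mathbb{Z}/2$ step that handles the nonorientable base is straightforward.
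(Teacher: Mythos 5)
Your overall route (direct $H_1$ computation from the Seifert presentation, a rational Gysin sequence over $|B|$, and the orientation double cover for nonorientable $B$) is workable and organized differently from the paper, which instead passes to a single finite regular covering $\widehat{M}\to M$ that is an honest $S^1$-bundle over an orientable surface and identifies $H^*(M;\mathbb{Q})$ with the deck-group invariants of $H^*(\widehat{M};\mathbb{Q})$; that choice sidesteps the normalization issue you flag, since the Euler class only needs to be tracked for genuine bundles, where multiplicativity under coverings is standard. However, one step of your sketch fails as stated: the subcase where $B$ is nonorientable and $\varepsilon_S=0$ (e.g.\ $M(-2;(1,0))$, which reappears in \S8). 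There the pulled-back fibration on the orientation double cover $\tilde M\to\tilde B$ has Euler number $2\varepsilon_S=0$, so your ``previous analysis'' gives $H^*(\tilde M;\mathbb{Q})\cong H^*(\tilde B\times S^1;\mathbb{Q})$, in which cup products of degree-one classes do \emph{not} vanish; hence there is no vanishing upstairs to transfer by injectivity on invariants, and that sentence proves nothing in this subcase, even though it is covered by the first assertion of the lemma.

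The repair lives inside the framework you already set up. Since $M$ is orientable and $B$ is not, the deck involution $\sigma$ of $\tilde M$ reverses the fibre orientation and reverses the orientation of $\tilde B$; hence $\sigma^*$ sends the fibre-dual class $\theta\in H^1(\tilde M;\mathbb{Q})$ to $-\theta$ and acts by $-1$ on $H^2(\tilde B;\mathbb{Q})$. Therefore $H^1(M;\mathbb{Q})=H^1(\tilde M;\mathbb{Q})^{\sigma}$ consists of classes pulled back from $\tilde B$, so any triple product of degree-one classes on $M$ factors through $H^3$ of a surface and vanishes (equivalently, a product of two invariant degree-one classes lies in the image of $H^2(\tilde B;\mathbb{Q})$, is $\sigma$-invariant, and sits in a $(-1)$-eigenspace, hence is zero). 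With $\mu_M=0$, Poincar\'e duality as in \S1 and Lemma 2 then yields the ring of $\#^\beta(S^2\times S^1)$, exactly as in your $\varepsilon_S\neq0$ argument. With this amendment, and once you actually carry out the deferred identification of the rational Gysin Euler class with $\pm\varepsilon_S$, your proof is complete; the remaining loose ends (that $\theta^2=0$ over $\mathbb{Q}$, and that the Gysin sequence is one of $H^*(|B|;\mathbb{Q})$-modules, which you need for the K\"unneth upgrade when $B$ is orientable and $\varepsilon_S=0$) are routine.
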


\begin{proof}
There is a finite regular covering $q:\widehat{M}\to{M}$,
which is an $S^1$-bundle space with orientable base $\widehat{B}$, say.
Let $G=Aut(q)$.
Then $H^*(M;\mathbb{Q})\cong{H^*(\widehat{M};\mathbb{Q})^G}$.
If $B$ is nonorientable or if $\varepsilon_S\not=0$
then the regular fibre has image 0 in $H_1(M;\mathbb{Q})$,
and so $H^*(\widehat{B};\mathbb{Q})$ maps onto $H^*(M;\mathbb{Q})$.
Hence all cup products of degree-1 classes are 0.
In such cases, $H^*(M;\mathbb{Q})\cong{H^*}(\#^\beta{S^2\times{S^1}};\mathbb{Q})$.
Otherwise, $\widehat{M}\cong\widehat{B}\times{S^1}$ 
and $G$ acts orientably on each of $S^1$ and $\widehat{B}$.
Hence the image of $h$ in $H_1(M;\mathbb{Q})$ is nonzero 
and $H^*(M;\mathbb{Q})\cong{H^*(|B|\times{S^1};\mathbb{Q})}$.
\end{proof}

We may use the observations on cup product from \S1 to extract 
some information on the image of the regular fibre 
under the maps $H_1(j_X)$ and $H_1(j_Y)$.
 
\begin{theorem}
Let $M=M(g;S)$ where $g\geq1$ and $\varepsilon_S=0$.
If $M$ embeds in $S^4$ then $\chi(X)>1-\beta=-2g$ and $\chi(Y)<1+\beta=2g+2$.
If $\chi(X)<0$ then the image of $h$ in $H_1(Y;\mathbb{Q})$ is nontrivial.
\end{theorem}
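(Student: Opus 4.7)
The plan is to prove the second assertion first and deduce the first from it. By Lemma 8, since the base of $M$ is orientable and $\varepsilon_S=0$, the rational cohomology ring of $M$ is isomorphic to that of $T_g\times S^1$, and the regular fibre $h$ has nonzero image in $H_1(M;\mathbb{Q})$. Set $V:=H^1(M;\mathbb{Q})$ and let $h^\perp\subset V$ be the hyperplane of classes annihilating $h$. For any $s\in V$ with $s(h)=1$, cup product defines a (well-defined, independent of $s$) skew form on $h^\perp$ by $\omega_0(u_1,u_2):=u_1\cup u_2\cup s\in H^3(M;\mathbb{Q})=\mathbb{Q}$. The isomorphism of Lemma 8 identifies $\omega_0$ with the symplectic intersection form on $H^1(T_g;\mathbb{Q})$, so $\omega_0$ is nondegenerate of rank $2g$. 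Writing $v=(v-v(h)s)+v(h)\,s$ with $v-v(h)s\in h^\perp$, and using $H^3(T_g;\mathbb{Q})=0$ to kill $u_1\cup u_2\cup(v-v(h)s)$, a direct calculation yields the key identity
\[
\mu_M(u_1,u_2,v)\;=\;v(h)\,\omega_0(u_1,u_2),\qquad u_1,u_2\in h^\perp,\ v\in V.
\]

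Next, let $V=V_X\oplus V_Y$ be the Mayer-Vietoris splitting, with $V_X=j_X^*H^1(X;\mathbb{Q})$ and $V_Y=j_Y^*H^1(Y;\mathbb{Q})$. Assume that $(j_Y)_*(h)=0$; I will show $\chi(X)\geq 0$. Dualising, $V_X$ and $V_Y$ are the annihilators of $H_1(Y;\mathbb{Q})$ and $H_1(X;\mathbb{Q})$ respectively, so the hypothesis becomes $V_Y\subset h^\perp$. Since $h\neq 0$ in $H_1(M;\mathbb{Q})\cong H_1(X;\mathbb{Q})\oplus H_1(Y;\mathbb{Q})$ but vanishes in the second summand, $h$ is nonzero in $H_1(X;\mathbb{Q})$, so there exists $v\in V_X$ with $v(h)=1$. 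Put $V_X':=V_X\cap h^\perp$, a subspace of dimension $\beta_1(X)-1$. For any $u_1,u_2\in V_X'$ the triple $(u_1,u_2,v)$ lies in $V_X$, so Lemma 2 applied over $\mathbb{Q}$ (legitimate because $H^3(X;\mathbb{Q})=0$) gives $\mu_M(u_1,u_2,v)=0$; the displayed identity together with $v(h)=1$ then forces $\omega_0(u_1,u_2)=0$. Hence $V_X'$ is isotropic for the symplectic form on the $2g$-dimensional space $h^\perp$, so $\dim V_X'\leq g$, $\beta_1(X)\leq g+1$, and
\[
\chi(X)\;=\;1+\beta-2\beta_1(X)\;=\;2g+2-2\beta_1(X)\;\geq\;0.
\]

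This proves the second assertion. The first follows at once: if $\chi(X)=1-\beta=-2g$ then $\beta_1(Y)=0$, hence $(j_Y)_*(h)=0$ trivially in $H_1(Y;\mathbb{Q})$, contradicting $\chi(X)\geq 0$ (this step uses $g\geq 1$). The bound $\chi(Y)<1+\beta$ is then immediate from $\chi(X)+\chi(Y)=2$ together with $\chi(X)>1-\beta$. The main technical point is the derivation of the displayed identity for $\mu_M$ on $h^\perp\times h^\perp\times V$; I expect this to follow routinely from the cup product structure of $T_g\times S^1$ afforded by Lemma 8.
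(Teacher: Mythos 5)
Your proof is correct and follows essentially the same route as the paper: both use Lemma 8 to identify $H^*(M;\mathbb{Q})$ with $H^*(T_g\times S^1;\mathbb{Q})$, invoke the vanishing of triple products on $H^1(X;\mathbb{Q})$ from Lemma 2, and bound $\beta_1(X)$ by the dimension of an isotropic subspace of the symplectic form on the $2g$-dimensional hyperplane $h^\perp$. The only cosmetic difference is that you phrase the second assertion contrapositively and deduce the first assertion from it, whereas the paper gets $\chi(X)\neq 1-\beta$ directly from $\mu_M\neq 0$; both are sound.
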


\begin{proof}
Let $\{a_i^*,b_i^*;1\leq{i}\leq{g}\}$ be the images in $H^1(M;\mathbb{Q})$ 
of a symplectic basis for $H^1(|B|;\mathbb{Q})$. 
Then $a_i^*(h)=b_i^*(h)=0$ for all $i$.
Let $\theta\in{H^1(M;\mathbb{Q})}$ be such that $\theta(h)\not=0$.
By Lemma 8 we have
\[
H^*(M;\mathbb{Q})\cong{H^*(|B|\times{S^1};\mathbb{Q})}\cong
\mathbb{Q}[\theta,a_i^*,b_i^*,~\forall~i\leq{g}]/I,
\]
where $I$ is the ideal $(\theta^2,a_i^{*2}, b_i^{*2},
\theta{a_i^*b_i^*}-\theta{a_j^*b_j^*},
a_i^*a_j^*,b_i^*b_j^*, ~\forall~1\leq{i}<j\leq{g})$.

Since $\theta{a_1^*b_1^*}\not=0$ the triple product $\mu_M\not=0$,
and so $M$ has no embedding with $\beta_2(Y)=0$ (see \S1).
Hence $\chi(X)=1-\beta$ ($\Leftrightarrow\chi(Y)=1+\beta$)
is impossible.

If $\chi(X)<0$ then $\beta_1(X)>g+1$,
and so the image of $H^1(X;\mathbb{Q})$ in $H^1(M;\mathbb{Q})$ 
must contain some pair of classes from the image of $H^1(|B|;\mathbb{Q})$
with nonzero product.
But then it cannot also contain $\theta$, 
since all triple products of classes in $H^1(X;\mathbb{Q})$ are 0.
Thus the image of $H^1(Y;\mathbb{Q})$ must contain a class 
which is nontrivial on $h$, 
and so $j_Y(h)\not=0$ in $H_1(Y;\mathbb{Q})$.
\end{proof}

In particular, if $g=1$ then $\chi(X)=0$ and $\chi(Y)=2$.

Theorem 9 also follows from Lemma 3, 
since the centre of $\pi$ is not contained in the commutator subgroup $\pi_{[2]}=[\pi,\pi]$.

If the base orbifold $B$ is nonorientable or if $\varepsilon_S\not=0$
then $\mu_M=0$, by Lemma 8, 
and so the argument of Theorem 9 does not extend to these cases.
However, Lemma 8 also suggests that when $\varepsilon_S\not=0$ 
we should be able to use Massey product arguments as in \S2 
(where we considered the case $S=\emptyset$).

\begin{theorem}
Let $M=M(g;S)$, where $g\geq0$ and $\varepsilon_S\not=0$.
If $M$ embeds in $S^4$ with complementary regions $X$ and $Y$ then $\chi(X)=\chi(Y)=1$.
\end{theorem}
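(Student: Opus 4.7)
By Lemma 8, $\beta = 2g$ and the rational cup product on $H^1(M; \mathbb{Q})$ is trivial, so triple Massey products of degree-1 rational classes are defined with zero indeterminacy in $H^2(M; \mathbb{Q})$. By Lemma 1, $\chi(X) + \chi(Y) = 2$ and $\chi(W) = 1 + \beta - 2\beta_1(W)$, so the assertion $\chi(X) = \chi(Y) = 1$ reduces to $\beta_1(X) \leq g$ and $\beta_1(Y) \leq g$. The case $g = 0$ is immediate. For $g \geq 1$, the plan is to extend the Massey-product argument of \S3 for the $S^1$-bundle $M(g;(1,1))$ to the present setting by reducing to the bundle case via the finite cover supplied by Lemma 8, and then to apply the Lagrangian summand argument from that section verbatim.

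To set up the Massey structure, choose a symplectic basis $\{\alpha_i^*, \beta_i^*\}_{i=1}^g$ of $H^1(M; \mathbb{Q})$ pulled back from a symplectic basis of $H^1(|B|; \mathbb{Q}) = H^1(T_g; \mathbb{Q})$, giving a Lagrangian decomposition $H^1(M; \mathbb{Q}) = A \oplus B$. The key claim I would establish is that for each $i$ the triple Massey products $\langle \alpha_i^*, \alpha_i^*, \beta_i^* \rangle$ and $\langle \alpha_i^*, \beta_i^*, \beta_i^* \rangle$ form a basis of $H^2(M; \mathbb{Q})$ Poincar\'e dual to the chosen basis of $H_1(M; \mathbb{Q})$. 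This is precisely the statement worked out in \S3 for $M(g;(1,1))$. To promote it to general $M(g;S)$ with $\varepsilon_S \neq 0$, pass to the finite regular cover $\widehat{q}: \widehat{M} \to M$ from the proof of Lemma 8: since $\varepsilon_S \neq 0$, $\widehat{M}$ is an $S^1$-bundle over an orientable surface with nonzero Euler number, so the \S3 calculation applies directly on $\widehat{M}$, and naturality of Massey products under the pullback $\widehat{q}^*$ (which is injective on rational cohomology since $\widehat{q}$ is a finite cover) descends the nontriviality to $M$.

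With this structure in hand, the Lagrangian summand argument of \S3 runs with only cosmetic changes. Suppose for contradiction $\beta_1(X) > g$; then $L = j_X^*(H^1(X; \mathbb{Q}))$ is a direct summand of $H^1(M; \mathbb{Q})$ of rank $> g$ and so contains $a \in L \cap A$ and $b \in L$ whose images in $H^1(T_g; \mathbb{Q})$ satisfy $a \cup b \neq 0$. Reindexing we may take $a = \alpha_1^*$ and $b = \beta_1^* + b'$ with $b'$ in the span of the other basis vectors; then $\langle a, a, b \rangle \cup b \neq 0$ in $H^3(M; \mathbb{Q})$. On the other hand, the Mayer--Vietoris sequence for $S^4 = X \cup_M Y$ embeds $H^2(X; \mathbb{Q})$ into $H^2(M; \mathbb{Q})$, so lifting $a, b$ to $\tilde a, \tilde b \in H^1(X; \mathbb{Q})$ forces $\tilde a \cup \tilde b = 0$ already on $X$; hence $\langle \tilde a, \tilde a, \tilde b \rangle$ is defined on $X$, restricts to $\langle a, a, b \rangle$ on $M$, and its cup product with $\tilde b$ lies in $H^3(X; \mathbb{Q}) = 0$, contradicting the nonvanishing on $M$. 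By symmetry $\beta_1(Y) \leq g$ as well.

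The main obstacle I foresee lies in verifying the Massey nontriviality on $\widehat{M}$ itself. The \S3 write-up carries out the computation explicitly only for $\pi_1(M(1;(1,1)))$ and extends it to higher $g$ by analogy; for a general $S^1$-bundle $M(\widehat{g}; (1, \widehat{e}))$ with $\widehat{e} \neq 0$ and possibly $|\widehat{e}| > 1$, I would spell out the analogous inhomogeneous cochain computation on $\pi_1(\widehat{M})$, using that the fibre class is rationally a product of surface commutators. A self-contained alternative bypassing the cover would be to perform the cochain calculation directly on the standard Seifert presentation of $\pi_1(M)$; the hypothesis $\varepsilon_S \neq 0$ enters there by forcing the fibre class $h$ to lie rationally in the commutator subgroup of $\pi_1(M)$, which is the input the Massey identity requires.
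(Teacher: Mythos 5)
Your overall architecture matches the paper's: use Lemma 8 to get $\beta=2g$ and the vanishing of rational degree-one cup products (hence zero indeterminacy), then rule out $\beta_1(X)>g$ by producing a class of the form $\langle a,a,b\rangle\cup b\neq0$ in $H^3(M;\mathbb{Q})$ that would have to come from $H^3(X;\mathbb{Q})=0$; the Mayer--Vietoris/naturality mechanics of that contradiction are fine. The genuine gap is exactly the step you flag at the end: the nonvanishing of the relevant triple Massey products for general $\varepsilon_S\neq0$ is never established, and your covering-space route does not give it ``directly''. The \S3 computation is carried out only for Euler number $\pm1$ and only for symplectic basis classes, whereas the cover $\widehat{q}:\widehat{M}\to M$ of Lemma 8 is an $S^1$-bundle whose Euler number $\widehat{e}\neq 0$ is in general large, and the pulled-back classes $\widehat{q}^*a,\widehat{q}^*b$ are not part of a symplectic basis of $H^1(\widehat{B};\mathbb{Q})$ --- they are merely base classes with nonzero product there. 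So what your plan actually requires is the statement: for any $S^1$-bundle over an orientable surface with nonzero Euler number and any $u,v$ pulled back from the base with $u\cup v\neq0$ on the base, one has $\langle u,u,v\rangle\cup v\neq0$ upstairs. That is not quotable from \S3; it needs the cochain computation to be redone with the Euler number in the denominators (the transfer-injectivity and naturality parts of your plan are fine over $\mathbb{Q}$).

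The paper avoids the cover entirely and takes what you call the ``self-contained alternative'': starting from the Seifert presentation it kills $x_2,y_2,\dots,x_g,y_g$ to reduce to $g=1$, then passes to a $\mathbb{N}il^3$ quotient $\langle x,y,t\mid [x,y]=t^e,\ t\ \mathrm{central}\rangle$, where $e\neq0$ precisely because $\varepsilon_S\neq0$, and repeats the \S3 inhomogeneous cochain computation with $\theta(x^my^nt^pw)=-mn-\frac{p}{e}$ (rational coefficients being needed for the $\frac1e$), before running the same rank argument. So your instinct about where the hypothesis $\varepsilon_S\neq0$ enters --- forcing the fibre class rationally into the commutator subgroup --- is exactly right, but as written the proposal defers the one computation that constitutes the content of the theorem beyond \S3, and the claim that the \S3 calculation ``applies directly'' on $\widehat{M}$ is false as stated.
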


\begin{proof}
The group $\pi=\pi_1(M(g;S))$ has a presentation
\[
\langle{x_1,y_1,\dots,x_g,y_g, c_1,\dots,c_r,h}\mid
\Pi[a_i,b_i]\Pi{c_j}=1,~c_i^{\alpha_i}h^{\beta_i}=1,~h~central\rangle.
\]
We may assume that $g\geq1$, 
for if $g=0$ then $M$ is a $\mathbb{Q}$-homology 3-sphere 
and the result is clear.
To calculate cup products and Massey products of pairs of elements of
a standard basis for $H^1(\pi;\mathbb{Q})$ (corresponding to the Kronecker dual of a symplectic basis for $H_1(|B|;\mathbb{Q})$),
it suffices to reduce to the case $g=1$.
Let $G=\pi/\langle\langle{x_2,y_2,\dots,x_g,y_g}\rangle\rangle$,
so $G$ has a presentation
\[
\langle{x,y, c_1,\dots,c_r,h}\mid
[x,y]\Pi{c_j}=1,~c_i^{\alpha_i}h^{\beta_i}=1,~h~central\rangle.
\]
Let $G_\tau=\langle\langle{c_1,\dots,c_r,h}\rangle\rangle$,
and let $H$ be the preimage in $G$ of the torsion subgroup of $G/[G,G_\tau]$.
Then $G_\tau/H\cong\mathbb{Z}$, with generator $t$, say,
and $[x,y]=t^e$ for some $e\not=0$.
Every element has a normal form $g=x^my^nt^pw$, with $w\in{H}$.
Define functions $\phi_\xi,\phi_\eta$ and $\theta:\pi\to\mathbb{Q}$
by 
\[
\phi_\xi(x^my^nt^pw)=\frac{m(1-m)}2,\quad
\phi_\eta(x^my^nt^pw)=\frac{n(1-n)}2
\]
\[
\mathrm{and}\quad\theta(x^my^nt^pw)=-mn-\frac{p}e,
\]
for all $x^my^nt^pw\in{G}$.
(In effect, we are passing to the $\mathbb{N}il^3$-group $G/H$,
with presentation $\langle{x,y, t}\mid[x,y]=t^e,~t~central\rangle$.)
We may now complete the argument as in \S2,
and we may conclude that only $\chi(X)=\chi(Y)=1$ is possible when $\varepsilon_S\not=0$.
\end{proof}

If $\chi(X)=0$ and $h$ has nonzero image in $H_1(X;\mathbb{Q})$
then $S$ is skew-symmetric (i.e., the Seifert data occurs 
in pairs $\{(a,b),(a,-b)\}$),
by the main result of \cite{Hi09}.
(In particular, this must be the case if $g$ and $\varepsilon_S$ are 0.)
Conversely, if $S$ is skew-symmetric and all cone point orders 
$a_i$ are odd then $M(0;S)$ embeds smoothly.
Since $\beta=1$ we must have $\chi(X)=0$ and $H_1(Y;\mathbb{Q})=0$.
(In fact, for the embedding constructed on page 693 of \cite{CH98}
the component $X$ has a fixed point free $S^1$-action.)
Hence also $M(g;S)$ embeds smoothly, as in Lemma 3.2 of \cite{CH98},
which gives embeddings with $\chi(X)=0$.
Is there a natural choice of 0-framed bipartedly sliceable
link representing $M(0;S)$?
If so then all values of $\chi(X)$ consistent with Theorem 8
are possible for $M(g;S)$.

However, even if $\chi(X)=0$ the other hypothesis of the main theorem of \cite{Hi09} 
need not hold.
For instance, we may partition the standard 0-framed link representing $M=T_2\times{S^1}$ 
into  3- and 2-component trivial sublinks in two essentially different ways.
For one, $\pi_1(X)\cong\mathbb{Z}\times{F(2)}$ and $\pi_1(Y)\cong{F(2)}$,
while for the other $\pi_1(X)\cong\mathbb{Z}*\mathbb{Z}^2$ and $\pi_1(Y)\cong\mathbb{Z}^2$.

If $\ell_M$ is hyperbolic then all even cone point orders have 
the same 2-adic valuation,
by Theorem 3.7 of \cite{CH98} (when $g<0$) and Lemma 6 of \cite{Hi11}
(when $g\geq0$).

Donald has stronger results for the case of smooth embeddings, 
using gauge theoretic methods rather than algebraic topology \cite{Do15}.
If $M(g;S)$ embeds smoothly and $\varepsilon_S=0$ then $S$ is skew-symmetric.
(Thus if $\varepsilon_S=0$ and all cone point orders are odd then 
$M(g;S)$ embeds smoothly if and only if $S$ is skew-symmetric.)
If $M(-c;S)$ (with $c>0$) embeds smoothly then $S$ is weakly skew-symmetric
(i.e., the data occurs in pairs $\{(a,b),(a,-b')\}$, 
where $b'=b$ or $bb'\equiv1$ mod $(a)$)
and all even cone point orders are equal.

Are there further obstructions related to 2-torsion in the cone point 
orders of the base orbifolds $B$?
What are the possible values of $\chi(X)$ for embeddings of $M(g;S)$
(with $\varepsilon_S=0$) or $M(-c;S)$? 

\section{recognizing the simplest embeddings}

The simplest 3-manifolds to consider in the present context are perhaps 
the total spaces of $S^1$-bundles over surfaces.
Most of those which embed have canonical ``simplest" embeddings.
We give some evidence that these may be characterized 
up to $s$-concordance by the conditions 
$\pi_1(X)\cong\pi_1(F)$, where $F$ is the base,
and $\pi_1(Y)$ is abelian.
(Embeddings $j_0,j_1:M\to{S^4}$ are $s$-concordant 
if they extend to an embedding of $M\times[0,1]$ in $S^4\times[0,1]$ 
whose complementary regions are $s$-cobordisms {\it rel\/} $\partial$.
We need this notion as it is not yet known whether 5-dimensional 
$s$-cobordisms are always products.)

Suppose first that $M\cong{T_g}\times{S^1}$.
There is a canonical embedding $j_g:M\to{S^4}$, 
as the boundary of a regular neighbourhood of the standard 
smooth embedding $T_g\subset{S^3}\subset{S^4}$.
Let $X_g$ and $Y_g$ be the complementary components.
Then $X_g\cong{T_g}\times{D^2}$ and $Y_g\simeq{S^1}\vee\bigvee^{2g}{S^2}$,
and so $\pi_1(Y_g)\cong\mathbb{Z}$.

We shall assume henceforth that $g\geq1$,
since embeddings of $S^2\times{S^1}$ and $S^3=M(0;(1,1))$ 
may be considered well understood.
Let $h$ be the image of the fibre in $\pi=\pi_1(E)$.

\begin{lemma}
Let $j:T_g\times{S^1}\to{S^4}$ be an embedding such that $\pi_1(X)\cong\pi_1(T_g)$.
Then $X$ is $s$-cobordant rel $\partial$ to $X_g=T_g\times{D^2}$.
\end{lemma}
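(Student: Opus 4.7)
The approach has three steps: (i) show $X$ is aspherical of the homotopy type of $T_g$; (ii) construct a simple homotopy equivalence $f\colon X\to X_g$ extending the identity on $M$; (iii) use TOP surgery rel $\partial$ to promote $f$ to an $s$-cobordism.

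\emph{Step (i): asphericity.} By Mayer--Vietoris (Lemma 1), $H_1(j_X;\mathbb{Z})$ is a split surjection onto $H_1(X;\mathbb{Z})=\mathbb{Z}^{2g}$, so the image of $\pi_1(j_X)$ in $\pi_1(X)\cong\pi_1(T_g)$ surjects onto the abelianization $\mathbb{Z}^{2g}$. Standard facts about subgroup structure of surface groups for $g\geq 1$ (any finite-index subgroup has, by Riemann--Hurwitz, abelianization of strictly larger rank, so the inclusion cannot be surjective on $H_1$; and the only way to surject on $H_1$ at all is with finite index) then force $\pi_1(j_X)$ to be onto. Theorem 4 gives $c.d.\,X\leq 2$, and since $\chi(X)=1+(2g+1)-2(2g)=2-2g=\chi(\pi_1(T_g))$ with $c.d.\,\pi_1(T_g)=2$, the same theorem yields that $X$ is aspherical.

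\emph{Step (ii): homotopy equivalence.} The inclusions $M\hookrightarrow X$ and $M\hookrightarrow X_g$ induce the same map on $\pi_1$, namely the projection $\pi_1(T_g)\times\mathbb{Z}\twoheadrightarrow\pi_1(T_g)$ killing the regular fibre $h$: centrality of $h$ pins this down, with a direct check needed only when $g=1$. Since $X_g=T_g\times D^2$ is a $K(\pi_1(T_g),1)$, obstruction theory extends the identity of $M$ to a map $f\colon X\to X_g$ inducing the identified isomorphism on $\pi_1$. Both source and target are aspherical with the same $\pi_1$, so $f$ is a homotopy equivalence; it is simple because $\mathrm{Wh}(\pi_1(T_g))=0$ (Bass--Heller--Swan for $g=1$, Farrell--Jones for $g\geq 2$).

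\emph{Step (iii): surgery.} The map $f$ determines a class in the simple topological structure set $\mathcal{S}^{TOP}_s(X_g\,\mathrm{rel}\,\partial)$, whose vanishing is precisely the desired $s$-cobordism statement. Surface groups are good in the sense of Freedman--Quinn, so the four-dimensional TOP surgery exact sequence
\[
L_5^s(\mathbb{Z}[\pi_1(T_g)])\to\mathcal{S}^{TOP}_s(X_g\,\mathrm{rel}\,\partial)\to[X_g/\partial X_g,G/TOP]\to L_4^s(\mathbb{Z}[\pi_1(T_g)])
\]
is available. One first checks that the normal invariant of $f$ in $[X_g/\partial X_g,G/TOP]\cong H^*(T_g;\pi_*(G/TOP))$ vanishes, using that $X$ and $X_g$ are aspherical 4-manifolds of the same homotopy type and Euler characteristic so that the Kirby--Siebenmann invariant and other characteristic class differences are forced to zero. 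The main obstacle is then the lift to $\mathcal{S}^{TOP}_s$: one must show that the class of $f$ coincides with that of the identity, which requires controlling $L_5^s(\mathbb{Z}[\pi_1(T_g)])$ and its action. This is the heart of the argument and rests on the Farrell--Jones conjecture for surface groups (known), giving an assembly-map isomorphism from which the relative structure set rel $\partial$ can be shown to be trivial.
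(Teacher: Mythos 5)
Your overall architecture (asphericity via Theorem 4, a homotopy equivalence rel $\partial$, then TOP surgery with $L_5(\pi_1(T_g))$ acting trivially) matches the paper's, but Step (i) has a genuine gap at the one point where the real work is needed: proving that $\pi_1(j_X)$ is onto. Knowing that the image of $\pi_1(j_X)$ surjects onto $H_1(X;\mathbb{Z})\cong\mathbb{Z}^{2g}$ does \emph{not} force it to be all of $\pi_1(T_g)$. Your parenthetical ``Riemann--Hurwitz'' argument is a non sequitur: a proper finite-index subgroup has abelianization of \emph{larger} rank, and a group of larger rank can perfectly well surject onto $\mathbb{Z}^{2g}$. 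Indeed it does happen: pull back a non-normal subgroup such as $\langle(12)\rangle\leq S_3$ under an epimorphism $\pi_1(T_g)\to S_3$. The resulting subgroup $H$ has index $3$, and $H[\pi,\pi]$ has index dividing $3$ in $\pi=\pi_1(T_g)$; it cannot have index $3$, since then $H=H[\pi,\pi]\supseteq[\pi,\pi]$ would be normal. Hence $H[\pi,\pi]=\pi$, i.e.\ the proper subgroup $H$ surjects onto $H_1(\pi;\mathbb{Z})$. Your claim that ``the only way to surject on $H_1$ at all is with finite index'' is likewise asserted without proof. So surjectivity of $\pi_1(j_X)$, and with it the application of Theorem 4, is not established by your argument.

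The paper closes this gap differently: it first rules out the image being a free (infinite-index) subgroup by a cup-product argument --- $H^2(X;\mathbb{Z})\cong\mathbb{Z}$ is a direct summand of $H^2(M;\mathbb{Z})$ and is generated by cup products of classes pulled back from $H^1(X;\mathbb{Z})$, which is impossible if $\pi_1(X)$ were replaced by a free image --- so the image has finite index $d$; then a generator count (an index-$d$ subgroup is an orientable surface group needing $2(gd-d+1)$ generators, whereas the image is generated by the $2g+1$ generators of $\pi_1(M)$) forces $d=1$ when $g>1$, with $g=1$ handled directly. You would need some such input (cup products or a generator bound), not just the $H_1$-surjection. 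Secondarily, in Step (iii) the vanishing of the normal invariant is not ``forced'' merely by equal Euler characteristics (it lives in $H^2(X_g,\partial X_g;\mathbb{F}_2)\oplus H^4(X_g,\partial X_g;\mathbb{Z})$), and the relative structure set need not be trivial; what is needed, and what the paper takes from Theorem 6.7 and Lemma 6.9 of its reference [Hi], is that the class of the specific homotopy equivalence agrees with the basepoint, the $L_5$-action being trivial. Your Step (ii) is essentially the paper's, and is fine modulo the $g=1$ caveat you already note.
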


\begin{proof}
Since $H^2(X;\mathbb{Z})\cong\mathbb{Z}$ is a direct summand of $H^2(M;\mathbb{Z})$ and is generated by cup products of classes from $H^1(X;\mathbb{Z})$ the image of $\pi_1(j_X)$ cannot be a free group.
Therefore it has finite index, $d$ say, 
and so $\chi(\mathrm{Im}(\pi_1(j_X)))=d\chi(F)$.
Since $\mathrm{Im}(\pi_1(j_X)$ is an orientable surface group,
it requires at least $2-d\chi(F)=2(gd-d+1)$ generators.
On the other hand, $\pi$ needs just $2g+1$ generators.
Thus if $g>1$ we must have $d=1$, and so $\pi_1(j_X)$ is onto.
This is also clear if $g=1$, 
for then $\pi_1(X)\cong{H_1(X;\mathbb{Z})}$ 
is a direct summand of $H_1(M;\mathbb{Z})$.
In all cases, we may apply Theorem 4 to conclude that $X$ is aspherical.

Any homeomorphism from $\partial{X}$ to $\partial{X_g}$ which 
preserves the product structure extends to a homotopy equivalence of pairs 
$(X,\partial{X})\simeq(X_g,\partial{X_g})$.
Now $L_5(\pi_1(T_g))$ acts trivially on the $s$-cobordism 
structure set $S_{TOP}^s(X_g,\partial{X_g})$,
by Theorem 6.7 and Lemma 6.9 of \cite{Hi}.
Therefore $X$ and $X_g$ are TOP $s$-cobordant (rel $\partial$).
\end{proof}

If $\pi_1(Y)\cong\mathbb{Z}$ then 
$\Sigma=Y\cup(T_g\times{D^2})$ is 1-connected,
since $\pi_1(Y)$ is generated by the image of $h$,
and $\chi(\Sigma)=2$.
Hence $\Sigma$ is a homotopy 4-sphere, 
containing a locally flat copy of $T_g$ with exterior $Y$.

\begin{lemma}
If there is a map $f:Y\to{Y_g}$ which extends a homeomorphism 
of the boundaries then $Y$ is homeomorphic to $Y_g$.
\end{lemma}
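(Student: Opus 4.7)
The plan is to apply TOP surgery for $4$-manifolds with boundary, exploiting that $\pi_1(Y_g)\cong\mathbb{Z}$ is good in the sense of Freedman-Quinn so that the five-dimensional $s$-cobordism theorem over $\mathbb{Z}[\mathbb{Z}]$ is available; $s$-cobordism rel $\partial$ will then upgrade to homeomorphism.

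First I would verify that $f$ is a simple homotopy equivalence of pairs. Since $\pi_1(Y_g)\cong\mathbb{Z}$ is generated by the image of the regular fibre $h$ under the inclusion of $\partial Y_g$, and $f$ restricts to a homeomorphism of boundaries, $\pi_1(f)$ is surjective. Combined with the standing hypothesis $\pi_1(Y)\cong\mathbb{Z}$ from the paragraph preceding the lemma, $\pi_1(f)$ is an isomorphism. Mayer-Vietoris applied to the decompositions $S^4=X_g\cup_M Y_g$ and to the homotopy $4$-sphere $\Sigma=X_g\cup_M Y$ (simply connected with $\chi=2$ by the discussion above the lemma) gives $H_*(Y;\mathbb{Z})\cong H_*(Y_g;\mathbb{Z})$. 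Lifting to the universal covers and applying equivariant Mayer-Vietoris identifies $f_*$ on $\mathbb{Z}[\mathbb{Z}]$-homology with an isomorphism, and Whitehead's theorem (together with the vanishing of $Wh(\mathbb{Z})$) then shows $f$ is a simple homotopy equivalence rel $\partial$.

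Since $f\vert_\partial$ is a homeomorphism and both $Y$ and $Y_g$ have trivial Kirby-Siebenmann invariant (both lie in $S^4$, and $Y_g$ is smooth), $f$ represents a class in the simple topological structure set $S^s_{TOP}(Y_g,\partial Y_g)$, and the goal is to show $[f]=[\mathrm{id}]$. I would read this off the surgery exact sequence
\[
L_5(\mathbb{Z}[\mathbb{Z}])\to S^s_{TOP}(Y_g,\partial Y_g)\to [Y_g/\partial Y_g,G/TOP]\to L_4(\mathbb{Z}[\mathbb{Z}]),
\]
showing first that the normal invariant of $f$ vanishes (the obstruction is concentrated in low degrees where the boundary data already trivialises the relevant characteristic classes) and then that the action of $L_5(\mathbb{Z}[\mathbb{Z}])$ on the component of $[\mathrm{id}]$ is trivial.

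The latter is the main obstacle, exactly as for Lemma 11. By the Shaneson splitting $L_5(\mathbb{Z}[\mathbb{Z}])\cong L_5(\mathbb{Z})\oplus L_4(\mathbb{Z})\cong\mathbb{Z}$, so it suffices to kill the action of a single generator. Following the strategy of Theorem 6.7 and Lemma 6.9 of \cite{Hi} used in the preceding lemma, I would realise this generator by a boundary-fixing self-$s$-cobordism of $Y_g$, obtained via Wall realisation applied to the unknotted circle generating $\pi_1(Y_g)$ so that the $L_4(\mathbb{Z})$ part is absorbed by a topological $E_8$-cobordism along this circle. Granted this, $[f]=[\mathrm{id}]$ in $S^s_{TOP}(Y_g,\partial Y_g)$, so $Y$ and $Y_g$ are TOP $s$-cobordant rel $\partial$; since $\mathbb{Z}$ is good, Freedman-Quinn's $s$-cobordism theorem gives $Y\cong Y_g$.
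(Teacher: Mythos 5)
The structure of your argument matches the paper's up to the surgery exact sequence, but there is a genuine gap at the step where you assert that the normal invariant of $f$ vanishes. The normal invariant lives in $[Y_g/\partial Y_g, G/TOP]\cong H^2(Y_g,\partial Y_g;\mathbb{F}_2)\oplus H^4(Y_g,\partial Y_g;\mathbb{Z})$; the $H^4$ part is killed by the signature, but the $H^2$ part is $H_2(Y_g;\mathbb{F}_2)\cong(\mathbb{F}_2)^{2g}$, which is nontrivial for $g\geq1$, and nothing in the hypotheses forces the Kervaire--Arf part of $\nu(f)$ to be zero. Your parenthetical justification (``the boundary data already trivialises the relevant characteristic classes'') is not an argument: the invariant is already a rel-boundary class, and homotopy equivalences of $4$-manifolds with nonzero codimension-two Arf invariant certainly exist. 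So you cannot conclude $[f]=[\mathrm{id}]$ in $S^s_{TOP}(Y_g,\partial Y_g)$.

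The paper avoids this issue entirely: after establishing (as you do) that $L_5(\mathbb{Z}[\mathbb{Z}])$ acts trivially, so that the normal invariant gives a bijection $S_{TOP}(Y_g,\partial Y_g)\cong H_2(Y_g;\mathbb{F}_2)$, it shows that \emph{every} class in this set is realised by a self-homotopy equivalence of $(Y_g,\partial Y_g)$. Since $H_2(\mathbb{Z};\mathbb{F}_2)=0$, the Hurewicz map $\pi_2(Y_g)\to H_2(Y_g;\mathbb{F}_2)$ is onto, so one can choose $\alpha\in\pi_2(Y_g)$ Poincar\'e dual to $\nu(f)$ and form the pinch map $f_\alpha=(\mathrm{id}_{Y_g}\vee\alpha\eta^2)\circ(\mathrm{collapse})$, which by Theorem 16.6 of Wall is a self-equivalence with normal invariant $\nu(f)$. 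Hence $f$ and a self-equivalence of $Y_g$ represent the same structure, which still yields $Y\cong Y_g$ even though $f$ need not be normally cobordant to the identity. This realisation-by-pinch-maps step is the essential idea missing from your proposal; without it (or a genuine proof that $\nu(f)=0$), the argument does not close. Your preliminary step that $f$ is a (simple) homotopy equivalence is acceptable, though the paper's route --- $\pi_2(Y)\cong\Lambda^{2g}$ is free because projective $\mathbb{Z}[t,t^{-1}]$-modules are free, and a degree-one map induces a surjection, hence an isomorphism, on these $\pi_2$'s --- is cleaner than an equivariant Mayer--Vietoris comparison and you should supply the details if you keep your version.
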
 

\begin{proof}
Let $\Lambda=\mathbb{Z}[t,t^{-1}]$ be the group ring of $\pi_1(Y)=\langle{t}\rangle$, and let $\Pi=\pi_2(Y)$.
As in Theorem 4, $H_q(Y;\Lambda)=H^q(Y;\Lambda)=0$ for $q>2$,
and the equivariant chain complex for $\widetilde{Y}$
is chain homotopy equivalent to a finite projective $\Lambda$-complex
\[
Q_*=\Pi\oplus(Z_1\to{Q_1}\to{Q_0})
\]
of length 2, with $Z_1\to{Q_1}\to{Q_0}$ a resolution of $\mathbb{Z}$.
The alternating sum of the ranks of the modules $Q_i$ is $\chi(Y)=2g$.
Hence $\Pi\cong\Lambda^{2g}$,
since projective $\Lambda$-modules are free. 
In particular, this holds also for $Y_g$.

If $f:Y\to{Y_g}$ restricts to a homeomorphism 
of the boundaries then $\pi_1(f)$ is an isomorphism.
Comparison of the long exact sequences of the pairs shows that 
$f$ induces an isomorphism $H_4(Y,\partial{Y};\mathbb{Z})\cong
{H_4(Y,\partial{Y};\mathbb{Z})}$, and so has degree 1.
Therefore $\pi_2(f)=H_2(f;\Lambda)$ is onto, by Poincar\'e-Lefshetz duality.
Since $\pi_2(Y)$ and $\pi_2(Y_g)$ are each free of rank $2g$,
it follows that $\pi_2(f)$ is an isomorphism,
and so $f$ is a homotopy equivalence, by the Whitehead and Hurewicz Theorems.

Thus $f$ is a homotopy equivalence {\it rel} $\partial$, by the HEP,
and so it determines an element of the structure set $S_{TOP}(Y_g,\partial{Y_g})$.
The group $L_5(\mathbb{Z})$ acts trivially on the structure set, as in Lemma 10,
and so the normal invariant gives a bjection
$S_{TOP}(Y_g,\partial{Y_g})\cong{H^2(Y_g,\partial{Y_g};\mathbb{F}_2)}\cong
{H_2(Y_g;\mathbb{F}_2)}$.
Since $H_2(\mathbb{Z};\mathbb{F}_2)=0$ the Hurewicz homomorphism maps $\pi_2(Y_g)$ onto $H_2(Y_g;\mathbb{F}_2)$.
Therefore there is an $\alpha\in\pi_2(Y_g)$
whose image in $H_2(Y_g;\mathbb{F}_2)$ is the Poincar\'e dual of the normal invariant of $f$.
Let $f_\alpha$ be the composite of the map from $Y_g$ to $Y_g\vee{S^4}$
which collapses the boundary of a 4-disc in the interior of $Y_g$ with 
$id_{Y_g}\vee\alpha\eta^2$, where $\eta^2$ is the generator of $\pi_4(S^2)$.
Then $f_\alpha$ is a self homotopy equivalence of $(Y_g,\partial{Y_g})$
whose normal invariant agrees with that of $f$.
(See Theorem 16.6 of \cite{Wl}.)
Therefore $f$ is homotopic to a homeomorphism $Y\cong{Y_g}$.
\end{proof}

However, finding such a map $f$ to begin with seems difficult.
Can we somehow use the fact that $Y$ and $Y_g$ are subsets of $S^4$?
In fact, $Y$ must be homeomorphic to $Y_g$ if $g\geq3$,
according to \cite{Kaw}.

Suppose now that $W$ is an $s$-cobordism {\it rel} $\partial$ 
from $X$ to $X_g$, and that $Y\cong{Y_g}$.
Since $g\geq1$ the 3-manifold $T_g\times{S^1}$ is irreducible 
and sufficiently large.
Therefore $\pi_0(Homeo(T_g\times{S^1}))\cong{Out}(\pi)$ \cite{Wd}.
If $g>1$ then $\pi_1(T_g)$ has trivial centre,
and so $Out(\pi)\cong
\left(\begin{smallmatrix}
Out(\pi_1(T_g))&0\\ \mathbb{Z}^{2g}&\mathbb{Z}^\times
\end{smallmatrix}\right)$. 
It follows easily that every self homeomorphism of $T_g\times{S^1}$ 
extends to a self homeomorphism of $T_g\times{D^2}$.
Attaching $Y\times[0,1]\cong{Y_g\times[0,1]}$ to $W$ 
along $T_g\times{S^1}\times[0,1]$ gives an $s$-concordance from $j$ to $j_g$.

If $g=1$ then $X\cong{T\times{D^2}}$ and $Out(\pi)\cong{GL(3,\mathbb{Z})}$.
Automorphisms of $\pi$ are generated by those which may be realized 
by homeomorphisms of $T\times{D^2}$ together with those 
that may be realized by homeomorphisms of $Y_1$ \cite{Mo}.
Thus if embeddings of $T$ with group $\mathbb{Z}$ are standard
so are embeddings of $S^1\times{S^1}\times{S^1}$ with both complementary components having abelian fundamental groups.

The situation is less clear for bundles over $T_g$ with Euler 
number $\pm1$.
We may construct embeddings of such manifolds
by fibre sum of an embedding of $T_g\times{S^1}$
with the Hopf bundle $\eta:S^3\to{S^2}$.
However, it is not clear how the complements change under this operation.
There are natural 0-framed links representing such bundle spaces.
As we saw earlier, 
$M(1;(1,1))$ may be obtained by 0-framed surgery on the Whitehead link.
This is an interchangeable 2-component link,
and so $M(1;(1,1))$ has an embedding with $X\cong{Y}\simeq{S^1}\vee{S^2}$ 
and $\pi_1(X)\cong\pi_1(Y)\cong\mathbb{Z}$.
Is this embedding characterized by these conditions?
(Once again, it is enough to find a map which restricts 
to a homeomorphism on boundaries.)

Suppose now that $F$ is nonorientable.
We may again argue that if $j$ is an embedding of $M(-c;(1,e))$,
where $c\geq2$, and $\pi_1(X)\cong\pi_1(\#^cRP^2)$
then $X$ is aspherical, and hence is $s$-cobordant to $X_{c,e}$.
Moreover, if $\pi_1(Y)=Z/2Z$ then $Y$ is the exterior of an embedding 
of $\#^cRP^2$ in $S^4$ with normal Euler number $e$.
Kreck has shown that in certain cases embeddings of $\#^cRP^2$ 
with group $Z/2Z$ must be standard, and we should again expect
that $j$ is $s$-concordant to a standard embedding \cite{Kr}.
In particular, Kreck's result includes the case when $F=Kb$
(i.e., $c=2$).
Hence embeddings of the half-turn flat 3-manifold $M(-2;(1,0))$ 
and of the $\mathbb{N}il^3$-manifold $M(-2;(1,4))$ with 
$\pi_1(X)\cong\pi_1(Kb)$ and $\pi_1(Y)=Z/2Z$ are standard.  

Seven of the thirteen 3-manifolds with elementary amenable fundamental groups that embed are total spaces of $S^1$-bundles 
(namely,  $S^3$, $S^3/Q$, $S^2\times{S^1}$, $S^1\times{S^1}\times{S^1}$,
$M(-2;(1,0))$, $M(1;(1,1))$ and $M(-2;(1,4))$).
Two of these (apart from $S^3$) and five of the others are the result 
of surgery on 0-framed 2-component links with trivial component knots.
(See \cite{CH98}.)
The thirteenth such 3-manifold is the Poincar\'e homology sphere $S^3/I^*$, 
which bounds a contractible TOP 4-manifold $C$ (as do all homology 3-spheres)
and so embeds in the double $DC\cong{S^4}$.
However, it is well known that $S^3/I^*$ does not embed smoothly. 

Similar arguments apply to the standard embedding of
$M=\#^\beta(S^2\times{S^1})$ as the boundary of a regular neighbourhood 
of $\vee^\beta{S^1}$ in $S^4$.
If $M$ is any closed 3-manifold with an embedding $j:M\to{S^4}$ 
for which $\pi_1(j_X)$ is an isomorphism then the natural map from 
$H_3(M;\mathbb{Z})$ to $H_3(\pi;\mathbb{Z})$ is 0, 
since it factors through $H_3(j_X)=0$.
Hence $\pi\cong{F(\beta)}$.
Moreover, $X$ is aspherical, by Theorem 4, 
and $\pi_1(Y)\cong\pi_1(X\cup_MY)=1$,
by Van Kampen's Theorem.
Arguing as in Lemma 11, we find that
$X$ is TOP $s$-cobordant to $\natural^\beta(D^3\times{S^1})$.
Since $Y\subset{S^4}$, it has signature 0, and so
$Y\cong\natural^\beta(S^2\times{D^2})$, by 1-connected surgery.
Every self-homeomorphism of $\#^\beta(S^2\times{S^1})$ extends across
$\natural^\beta(D^3\times{S^1})$, 
and so $j$ is $s$-concordant to the standard embedding.

\end{document}